
\documentclass[12pt]{article}
\usepackage{amsmath}
\usepackage{amssymb}  
\usepackage{latexsym}
\usepackage{amsthm}
\usepackage{euscript}

\usepackage{txfonts} 
\usepackage{amsfonts}

\setcounter{secnumdepth}{0}           
\topmargin -.5in \textheight 8.5in
 

 \evensidemargin .125in
 \oddsidemargin  .125in

 \setcounter{section}{0}
 \setcounter{secnumdepth}{2}

 
\newcommand{\msf}[1]{\mathsf {#1}}

\newcommand{\mcal}[1]{{\mathcal {#1}}} 




\newtheorem{theorem}{Theorem}  
\newtheorem{lemma}[theorem]{Lemma}
\newtheorem{corollary}[theorem]{Corollary}
\newtheorem{proposition}[theorem]{Proposition}

\theoremstyle{definition}
\theoremstyle{definition}\newtheorem{definition}[theorem]{Definition}
\theoremstyle{definition}
\theoremstyle{definition}

\theoremstyle{definition}
\theoremstyle{definition}

\theoremstyle{plain}\newtheorem*{theorem*}{Theorem}
\theoremstyle{plain}\newtheorem*{corollary*}{Corollary}
\theoremstyle{remark}\newtheorem*{remark*}{Remark}
\theoremstyle{remark}\newtheorem*{remarks*}{Remarks}
\theoremstyle{definition}\newtheorem*{conjecture*}{Conjecture}
\theoremstyle{definition}\newtheorem*{definition*}{Definition}
\theoremstyle{definition}\newtheorem*{example*}{Example}

\theoremstyle{definition}\newtheorem*{question*}{Question}
\theoremstyle{definition}\newtheorem*{questions*}{Questions}



\def\qed{\ifhmode\unskip\nobreak\fi\ifmmode\ifinner\else\hskip5pt\fi\fi
 \hfill\hbox{\hskip5pt\vrule width4pt height6pt depth1.5pt\hskip1pt}}
 

\newcommand{\ov}[1]{\ensuremath{\overline{#1}}}

\newcommand{\Det}{\ensuremath{\operatorname{\mathsf{Det}}}}

\newcommand{\RR}{\ensuremath{{\mathbb R}}}     
\newcommand{\R}[1]{\ensuremath{{\mathbb R}^{#1}}} 




\newcommand{\Rp}{\ensuremath{{\mathbb R}_+}}   

\newcommand{\ZZ}{\ensuremath{{\mathbb Z}}}	   
\newcommand{\NN}{\ensuremath{{\mathbb N}}}	   
\renewcommand{\S}[1]{\ensuremath{{\bf S}^{#1}}} 
\renewcommand{\P}[1]{\ensuremath{{\bf P}^{#1}}} 


\newcommand{\om}[1]{\ensuremath{\omega(#1)}}

\newcommand {\pd}[1] {\ensuremath{\frac{\partial}{\partial #1}}}

\def\d#1dt{\frac{d#1}{dt}}    
\newcommand{\Om}{\ensuremath{\Omega}}

\newcommand{\lam}{\ensuremath{\lambda}}
\newcommand{\Lam}{\ensuremath{\Lambda}}

\newcommand{\Fix}[1]{\ensuremath{\operatorname{\mathsf {Fix}}(#1)}}
\newcommand{\co}{\colon\thinspace} 

\def\empty{\varnothing}

\newcommand{\fr}[1]{\ensuremath{\operatorname{\mathsf{Fr}}(#1)}}



\renewcommand{\gg}{\ensuremath{\mathfrak g}}
\renewcommand{\aa}{\ensuremath{\mathfrak a}}

\newcommand{\ff}{\ensuremath{\mathfrak f}}
\newcommand{\hh}{\ensuremath{\mathfrak h}}

\newcommand{\jj}{\ensuremath{\mathfrak j}}



\newcommand{\uu}{\ensuremath{\mathfrak u}}
\newcommand{\vv}{\ensuremath{\mathfrak v}}
\newcommand{\ww}{\ensuremath{\mathfrak w}}










\newcommand{\p}{\ensuremath{\partial}}

\begin{document}      
 
\def\empty{\varnothing}
\def\emp{\varnothing}           
\def\mfrak{\mathfrak}

\renewcommand{\om}{\omega}
\reversemarginpar 	
 \pagestyle{plain}

 \def\mylabel#1{\label{#1}} 
	
\newcommand{\Z}[1]{\ensuremath{{\msf  Z ( #1)}}}
\renewcommand{\dim} {\ensuremath{{\mathsf {dim}\,}}}

 \title{\bf Fixed points of local actions of nilpotent Lie groups on
    surfaces}

 \author{\Large  Morris W. Hirsch\\
 Mathematics Department\\
University of Wisconsin at Madison\\
University of California at Berkeley}
\maketitle

\begin{abstract} 
Let $G$ be connected nilpotent Lie group acting locally on a real surface
$M$.  Let $\varphi$ be the local flow on $M$ induced by a
$1$-parameter subgroup.  Assume $K$ is a compact set of fixed
points of $\varphi$ and $U$ is a neighborhood of $K$ containing no
other fixed points.

{\em Theorem:} If the Dold fixed-point index of $\varphi_t|U$ is
nonzero for  sufficiently small $t>0$, then $\Fix G \cap
K\ne\varnothing$.
\end{abstract}

\tableofcontents

\section{Introduction}  \mylabel{sec:intro}
{\em Notation:}
$M$ is a manifold with tangent bundle $TM$ and boundary $\p M$.
 The set of vector fields on $M$ is $\vv (M)$, and the set of vector
 fields that are $C^1$ (continously differentiable) is $\vv^1
 (M)$. The zero set of $X\in  \vv (M)$ is $\Z X$.

$G$ always denotes
a connected Lie group with unit element $e$ and Lie algebra $\gg$.
We usually treat $\gg$ as the set of one-parameter subgroups
$X\co\RR\to G$, but we also exploit the linear structure on $\gg$
defined by its identification with the tangent space to $G$ at $e$.

$\RR$ denotes the vector space of real numbers, $\Rp=[0,\infty)$, $\R n$ is Euclidean
$n$-space.    $\ZZ$ is the group of
integers, $\NN:=\{0,1, \dots\}$ is the set of natural numbers, and  $\emp$ is  the empty set.     
The frontier of a subset $S$ of a topological space is denoted by $\fr S$. 
The fixed point set of  $f\co A \to B$ is 
$\Fix f:= \{a\in A\co f (a)=a\}$.  
Maps are continuous unless the
contrary is indicated. 

\smallskip
In 1885 Poincar\'e \cite{Poincare85} published a seminal result on
surface dynamics, extended to higher dimensions  by Hopf
\cite{Hopf25} in 1925: 
\begin{theorem*} [\sc Poincar\'{e}-Hopf]        
Every vector field on a closed manifold $M$ 
of nonzero Euler characteristic
vanishes at some point.
\end{theorem*}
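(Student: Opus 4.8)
I would argue by contradiction, using the time-$t$ maps of the flow generated by the vector field together with the Lefschetz--Hopf fixed point theorem. Suppose $X\in\vv(M)$ never vanishes; the aim is to deduce $\chi(M)=0$. First reduce to the smooth case: fix a Riemannian metric on the closed manifold $M$, so by compactness $m:=\inf_{p\in M}\|X(p)\|>0$, and any $Y\in\vv^1(M)$ with $\|X-Y\|_{C^0}<m/2$ satisfies $\|Y(p)\|\ge m/2>0$ for all $p$. So it suffices to derive the contradiction from a nowhere-zero $C^1$ vector field $Y$.

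Since $M$ is compact, $Y$ generates a global flow $\varphi=(\varphi_t)_{t\in\RR}$ by diffeomorphisms of $M$, with $\varphi_0=\mathrm{id}_M$ and $\left.\frac{d}{dt}\right|_{t=0}\varphi_t(p)=Y(p)$. A compactness argument then produces $\tau>0$ with $\Fix{\varphi_t}=\emp$ for every $t\in(0,\tau]$: in any chart $\varphi_t(x)=x+tY(x)+o(t)$ as $t\to0$, with the error $\int_0^t\bigl(Y(\varphi_s(x))-Y(x)\bigr)\,ds$ uniform in the base point by uniform continuity of $Y$ and of the flow, so $\|\varphi_t(x)-x\|\ge|t|\bigl(m/2-o(1)\bigr)>0$ once $t>0$ is small.

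Now run the Lefschetz count. Each $\varphi_t$ is homotopic to $\varphi_0=\mathrm{id}_M$ through $s\mapsto\varphi_{st}$, hence acts as the identity on $H_*(M;\QQ)$, so its Lefschetz number is $L(\varphi_t)=\sum_i(-1)^i\dim H_i(M;\QQ)=\chi(M)$. On the other hand the Lefschetz--Hopf theorem, applied to the self-map $\varphi_t$ of the compact manifold $M$, says that $\Fix{\varphi_t}=\emp$ forces $L(\varphi_t)=0$. Taking $0<t\le\tau$ yields $\chi(M)=0$, contradicting the hypothesis; hence $X$ vanishes at some point.

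The substantive content sits entirely in the two classical inputs used at the end: the normalization $L(\mathrm{id}_M)=\chi(M)$, which is the Hopf trace formula, and --- the genuine obstacle --- the implication ``no fixed points $\Rightarrow$ Lefschetz number $0$'', the hard half of the Lefschetz--Hopf theorem, which belongs to precisely the fixed-point-index machinery (Dold's index) on which the rest of the paper is built; the main theorem stated in the abstract can be read as a local, flow-based refinement of this same mechanism. The smoothing step is routine but genuinely needed, since a merely continuous vector field need not admit a flow; one could instead bypass it by treating $X/\|X\|$ as a nowhere-zero section of the unit tangent sphere bundle of $M$ and quoting the vanishing of the Euler class of $TM$, but this only relocates the same homological input.
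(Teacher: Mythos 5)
The paper does not prove this statement at all: it is quoted as the classical Poincar\'e--Hopf theorem with references to Poincar\'e (1885) and Hopf (1925), so there is no proof of record to compare against. Your argument is correct and is the standard ``soft'' proof of the existence statement: approximate by a nowhere-zero $C^1$ field, note that the time-$t$ maps of its flow are fixed-point free for small $t>0$, are homotopic to the identity (so have Lefschetz number $\chi(M)$), and apply the Lefschetz--Hopf theorem --- exactly the mechanism the paper itself alludes to when it remarks that Lefschetz's fixed point theorem yields fixed points of flows and semiflows, and the same index machinery underlying its main theorem. The only points left implicit (existence of a $C^1$ approximation in the $C^0$ norm, and the uniform-in-$p$ smallness of the error term giving a single $\tau>0$) are routine and correctly flagged.
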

A smooth vector field induces a flow $\Phi^X:=\{\Phi^X_t\}_{t\ge 0}$
  --- a continuous action of the group $\RR$ --- and the theorem implies
  $\Phi$ has a fixed point.  The same conclusion holds for semi-flows
  (continuous actions of $\Rp$) on a broad class of spaces including all
  compact polyhedra and topological manifolds, thanks to 
  Lefschetz's Fixed Point Theorem \cite{Lefschetz26}).  

In his pioneering 1964 paper, E. Lima \cite{Lima64} generalized the
Poincar\'e-Hopf Theorem to actions of connected abelian Lie groups on
compact surfaces, allowing nonempty boundaries.  This was extended to
nilpotent groups in 1986 by my former student
J. Plante \cite{Plante86} :

\begin{theorem*}    [\sc Plante] \mylabel{th:plante}
Every continuous  action of a connected nilpotent  Lie group on a compact
surface of nonzero Euler characteristic has a fixed point.
\end{theorem*}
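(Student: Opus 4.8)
The plan is to derive Plante's theorem directly from the Main Theorem stated in the abstract; essentially all of the work lies in that theorem, and the deduction below is short. Given a continuous action of a connected nilpotent Lie group $G$ on a compact surface $M$ with $\chi(M)\ne 0$, I would first dispose of the trivial group, where $\Fix G=M$, and then fix a nontrivial one-parameter subgroup $X\co\RR\to G$, with induced flow $\varphi_t(x)=X(t)\cdot x$ on $M$. Since the action is global, $\varphi=\{\varphi_t\}$ is a genuine continuous flow, and a global action is in particular a local action, so the Main Theorem applies. I would take $K:=\Fix\varphi$, which is closed and hence compact, and $U:=M$; then $U$ is a neighborhood of $K$ and $\Fix\varphi\cap U=K$, so $U$ contains no fixed points of $\varphi$ outside $K$.

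It remains only to verify the index hypothesis. For each $t$ the map $\varphi_t$ is homotopic to $\mathrm{id}_M$ through the flow, via $(x,s)\mapsto\varphi_{st}(x)$ with $0\le s\le 1$. Because a compact surface, with or without boundary, is a compact ANR and $\Fix{\varphi_t}$ is compact, the Lefschetz--Hopf theorem identifies the Dold fixed-point index of $\varphi_t|U=\varphi_t|M$, computed over all of $M$, with the Lefschetz number $\Lambda(\varphi_t)=\Lambda(\mathrm{id}_M)=\chi(M)\ne 0$. This holds for every $t>0$, in particular for all sufficiently small $t>0$. The Main Theorem now yields $\Fix G\cap K\ne\varnothing$, and hence $\Fix G\ne\varnothing$.

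I do not anticipate a genuine obstacle along this route: once the Main Theorem is available the argument is immediate, and that theorem is precisely what requires work. By way of contrast, the classical approach (Lima in the abelian case, Plante in general) proceeds by induction on $\dim G$, the base case being Lima's theorem that commuting flows on a surface of nonzero Euler characteristic have a common zero, and the inductive step the passage to the closed connected normal subgroup $N:=\overline{[G,G]}$ --- nilpotent of strictly smaller dimension --- whose fixed-point set $F:=\Fix N$ is $G$-invariant and carries an action of the abelian quotient $G/N$. The hard point there is that $F$ need not be a manifold, so one must construct a fixed-point index for flows on such invariant continua and show it is preserved along commuting flows; this is exactly the content the Main Theorem packages, which is why, granted that theorem, Plante's result costs no more than the Lefschetz computation above.
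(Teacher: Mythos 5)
Your deduction is correct, but it is not the paper's route: the paper does not prove Plante's theorem at all --- it quotes it as background with a citation to Plante's original argument (induction on $\dim G$, with Lima's theorem on commuting flows as the base case), and then sets out to generalize it. What you do instead is run the generalization backwards: take $K:=\Fix{\varphi^X}$, $U:=M$ (which is indeed a precompact open ENR neighborhood of $K$ in itself with $\ov U\cap\Fix{\varphi^X}=K$), note that $\varphi_t$ is homotopic to the identity through the flow, and invoke the Lefschetz--Hopf identification $I(\varphi_t|M)=\chi(M)\ne 0$ --- which is exactly the paper's property (D6) of Dold's index --- so that $K$ is an essential $X$-block and Theorem \ref{th:MAIN} applies. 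This is legitimate and, importantly, not circular within the paper's logic: the proof of Theorem \ref{th:MAIN} uses only Plante's auxiliary results (Propositions \ref{th:V} and \ref{th:plante2}, on codimension-one ideals, minimal sets and circle orbits), never the fixed-point theorem itself. What your route buys is that Plante's theorem drops out for free once the index machinery and Theorem \ref{th:MAIN} are in place, making precise the sense in which the Main Theorem is a generalization (it also quietly handles the degenerate cases correctly: the trivial group, a one-parameter subgroup acting trivially, and surfaces with boundary, since (D6) needs only a compact ENR). What Plante's original approach buys is independence from this machinery --- a self-contained, more elementary argument --- which is the appropriate source to cite if one wants the theorem without first proving the stronger local result.
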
     

Our goal is a generalization of Plante's Theorem to local actions
(Section \ref{sec:local}) on all surfaces.  This necessitates
replacement of the assumption $\chi (M)\ne 0$.
The new hypothesis is based Dold's fixed point index $I(f)\in \ZZ$ for
maps $f\co U\to M$, where $U\subset M$ is open and $\Fix f$  (see Section
\ref{sec:index}).   

Let $\Phi:=\{\Phi_t\}_{t\in\RR}$ be a local flow on $M$.
 A {\em block for $\Phi$}
is a compact set
\[\textstyle
 K\subset \Fix \Phi :=\bigcap_{t\in\RR}\Fix{\Phi_t}
\] 
 having an {\em isolating neighborhood} $U$: a precompact open
 neighborhood of $K$ such that $K=\Fix \Phi \cap \ov U$.  When
 $\Phi:=\Phi^X$, the local flow generated by a vector field $X$ on $M$
 then $K\subset \Z X$, the {\em zero set} of $X$.

For sufficiently small $t >0$, the {\em index of $\Phi$ at $K$} is
well defined by the formula
\[\msf i_K (\Phi):=I (\Phi_t|U).\] 
When $\Phi$ comes from a vector field $X$ we define
\[
 \msf i_K (X)=\msf i (X, U) :=\msf i_K (X).
\]

In  Theorem \ref{th:MAIN} and its corollaries  $G$
is nilpotent, and a
local action $\Phi$ of $G$ on a surface $M$ is postulated (see Section
\ref{sec:local}).  Each $X\in \gg$ is a
one-parameter subgroup $\RR \to G$, inducing a local flow
$\Phi^X$ on $M$ whose fixed point set is denoted by $\Fix X$.  A block $K$
for $\Phi^X$ is an {\em $X$-block}. $K$  is {\em
  essential} if $\msf i_K (X)\ne 0$. 
\begin{theorem}    \mylabel{th:MAIN}
If $K$  is an essential $X$-block,  then $\Fix
G \cap K\ne\varnothing$.   
\end{theorem}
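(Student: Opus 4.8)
The plan is to argue by induction on $\dim G$, using the lower central series of the nilpotent Lie group $G$. The base case $\dim G = 1$ is immediate: then $G$ is one-parameter, $X$ generates $G$ up to reparametrization, and $\Fix G \cap K = \Fix X \cap K \supset K \neq \emptyset$ since a nonzero index forces $K$ to be nonempty. For the inductive step, let $Z := [G,G]$ be the commutator subgroup; since $G$ is nilpotent and nontrivial, $Z$ is a proper connected normal subgroup, and $G/Z$ is abelian. The key structural fact I want to exploit is that the center $\Z{G}$ of $G$ is nontrivial, so I can choose a nontrivial one-parameter central subgroup, or alternatively work with a one-dimensional connected normal subgroup $N \triangleleft G$ (which exists: take a line in a nonzero ideal of $\gg$ contained in the center).

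First I would analyze the local flow $\Phi^X$ and the set $\Fix X \supset K$. The idea is to replace $K$ by a smaller essential block that is invariant under more of $G$. Concretely: because $N$ is normal in $G$, conjugation by elements of $G$ carries $\Phi^X$-orbits to $\Phi^Y$-orbits for the various $Y \in \gg$, and in particular the whole group $G$ permutes the fixed-point sets $\Fix{Y}$ in a controlled way. The heart of the matter is a \emph{localization} step: using the hypothesis $\msf i_K(X) \neq 0$ and the additivity of the Dold index over disjoint isolating neighborhoods, I want to find a point $p \in K$ (or a sub-block $K' \subset K$) whose index is still nonzero and which is fixed by a larger subgroup. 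This is where Lima's and Plante's surface techniques enter: on a surface, the fixed-point set of a flow near an isolated essential block has enough structure (one can pass to a Poincaré–Bendixson-type analysis of the induced local flows, or use the fact that the index localizes to finitely many connected components) that one can find a component $K_0$ of $\Fix{X} \cap \ov{U}$ with $\msf i_{K_0}(X) \neq 0$.

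Next, I would show that some nontrivial element or one-parameter subgroup of $G$ beyond $X$ preserves such an essential component, and hence preserves an essential block. The mechanism: since $N \triangleleft G$ is one-dimensional and central (or at least normal), the local flow $\Phi^N$ commutes with $\Phi^X$ up to the group structure, so $\Phi^N$ permutes the connected components of $\Fix X \cap \ov U$; since these components are finite in number and $\RR$ (the parameter group of $N$) is connected, $\Phi^N$ fixes each such component setwise, and by a further index/continuity argument fixes an essential block $K'$ pointwise — so $K' \subset \Fix{\langle X, N\rangle}$. Now the quotient $G/N$ is a connected nilpotent Lie group of strictly smaller dimension acting locally on (a neighborhood retract of) $K'$, carrying the image of $X$, and with $K'$ an essential block for that induced flow; by the inductive hypothesis applied to $G/N$, there is a point of $K'$ fixed by all of $G/N$, hence (being already fixed by $N$) fixed by all of $G$. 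Thus $\Fix G \cap K \supset \Fix G \cap K' \neq \emptyset$.

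The main obstacle, and the step I expect to absorb most of the real work, is the \emph{localization/invariance} step: showing that an essential block can be shrunk to one that is genuinely invariant (pointwise) under an additional subgroup, rather than merely permuted. Naively, a commuting flow $\Phi^N$ permutes $\Fix{X}$, but to conclude it fixes an \emph{essential} sub-block pointwise one must rule out that $\Phi^N$ moves points within a single connected essential component without fixing any of them — this requires genuine two-dimensional input (that $M$ is a surface), presumably via the structure of flows on surfaces near isolated rest points, the behavior of the Dold index under homotopy, and perhaps an averaging or fixed-point argument for the abelian quotient acting near $K$. Handling boundary points of $M$ inside $K$, and ensuring the induced local action of $G/N$ on a neighborhood of $K'$ is well-defined and still has $K'$ as an essential block, are the natural technical complications to dispatch along the way.
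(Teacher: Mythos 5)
There is a genuine gap, and it sits exactly where you flagged it: the ``localization/invariance'' step is not a technical complication to be dispatched, it is the whole theorem in its abelian core. You assert that $\Phi^N$ permutes the connected components of $\Fix X\cap \ov U$, that these are finite in number, and that a further ``index/continuity argument'' makes $N$ fix an essential sub-block \emph{pointwise}. None of these is justified: $\Fix X\cap\ov U$ can have infinitely many components; preserving a component setwise is far from fixing it pointwise (this is precisely the hard content of Bonatti-type theorems, as the quoted review of \cite{Bonatti92} emphasizes); and no mechanism is given that converts a nonzero Dold index of $\Phi^X$ into a fixed point of the commuting flow $\Phi^N$ inside $K$. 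If you could carry out this step you would already have proved the theorem for the $2$-dimensional subgroup generated by $X$ and $N$, so the proposal assumes a special case of the statement being proved. The second structural problem is the inductive step: $G/N$ has no induced local action on $M$, nor on a neighborhood of $K'$ --- the action descends only where $N$ acts trivially, and $N$ is only known to fix $K'$ itself, not a neighborhood; moreover ``acting on a neighborhood retract of $K'$'' does not produce the data (a surface, a local flow, an essential block) needed to invoke the inductive hypothesis. So even granting the invariance step, the induction as set up does not close.

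For contrast, the paper inducts over \emph{subgroups} rather than quotients: Lemma \ref{th:covering} covers $\gg$ by codimension-one ideals, Proposition \ref{th:stable} lets one perturb $X$ to elements $Y_k$ lying in infinitely many \emph{distinct} codimension-one ideals $\hh_k$ while keeping $U$ isolating and the index nonzero, and the inductive hypothesis (applied to the corresponding subgroups, which genuinely act by restriction) gives minimal sets $L_k\subset\Fix{\hh_k}\cap U$. If $U$ contains only finitely many minimal sets, two distinct ideals share a minimal set, and since two distinct codimension-one subspaces span $\gg$ (Proposition \ref{th:abh}) its points lie in $\Fix G$. The infinitely-many-minimal-sets case is handled not by your componentwise invariance argument but by Plante's structure theory (Proposition \ref{th:plante2}): all but finitely many minimal sets are circle orbits lying in a compact invariant surface $P$ with $\chi(P)=0$, and index additivity (D3) plus (D6) pushes the nonzero index into $U\setminus P$, where the finite case applies. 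Your proposal would need an argument of comparable substance to replace both of these mechanisms.
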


As distinct $X$-blocks are disjoint, we obtain:

\begin{corollary}               \mylabel{th:MAINcor1}
If $X$ has $n$ essential blocks, then $\Fix G$ has at least $n$
components.  \qed
\end{corollary}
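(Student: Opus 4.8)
The plan is to deduce the corollary from Theorem \ref{th:MAIN} by showing that each essential block captures an entire component of $\Fix G$, so that pairwise disjoint blocks yield pairwise distinct components. First I would fix the $n$ essential $X$-blocks $K_1,\dots,K_n$; these are pairwise disjoint, since distinct $X$-blocks are disjoint. Choose for each $i$ an isolating neighborhood $U_i$, so that $U_i$ is precompact and open with $K_i\subset U_i$ and $K_i=\Fix X\cap\ov{U_i}$. Because $X$ is a one-parameter subgroup of $G$, any point fixed by $G$ is fixed by the induced local flow $\Phi^X$, whence $\Fix G\subset\Fix X$. Theorem \ref{th:MAIN} then provides a point $p_i\in\Fix G\cap K_i$ for each $i$; writing $C_i$ for the component of $\Fix G$ containing $p_i$, the corollary will follow once I show the $C_i$ are $n$ distinct components.

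The heart of the argument is a confinement claim: \emph{any component $C$ of $\Fix G$ that meets $K_i$ is contained in $K_i$.} To prove it I would first observe that $\fr{U_i}$ carries no $\Phi^X$-fixed point: every point of $\Fix X\cap\ov{U_i}$ lies in $K_i\subset U_i$, so $\Fix X\cap\fr{U_i}=\varnothing$, and a fortiori $\Fix G\cap\fr{U_i}=\varnothing$. Now let $C$ be a component of $\Fix G$ meeting $K_i$; then $C\cap U_i\ne\varnothing$, and I claim $C\subset\ov{U_i}$. If not, then $C\setminus\ov{U_i}\ne\varnothing$, and since $C\cap\fr{U_i}\subset\Fix G\cap\fr{U_i}=\varnothing$ we obtain a separation
\[
 C=(C\cap U_i)\,\sqcup\,(C\setminus\ov{U_i})
\]
into two nonempty sets, open in $C$ because $U_i$ is open and $\ov{U_i}$ is closed. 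This contradicts connectedness of $C$, so $C\subset\ov{U_i}$ and therefore $C\subset\Fix G\cap\ov{U_i}\subset\Fix X\cap\ov{U_i}=K_i$.

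Applying the confinement claim to each $C_i$ (which meets $K_i$ at $p_i$) gives $C_i\subset K_i$. Since $K_1,\dots,K_n$ are pairwise disjoint and each $C_i$ is nonempty, the inclusions $C_i\subset K_i$ force $C_i\cap C_j=\varnothing$ for $i\ne j$, so the $C_i$ are $n$ distinct components of $\Fix G$. This is exactly the assertion of the corollary.

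I expect the confinement claim to be the only real obstacle: the content is to rule out a component of $\Fix G$ leaving $U_i$ and reconnecting, within $\Fix G$, to the fixed points contributed by another block. The isolating condition $K_i=\Fix X\cap\ov{U_i}$ is precisely what blocks this, by forcing $\fr{U_i}$ to be free of $\Phi^X$-fixed points; the inclusion $\Fix G\subset\Fix X$ and the disjointness of the enumerated blocks are then routine bookkeeping.
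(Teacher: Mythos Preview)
Your proposal is correct and follows the paper's approach exactly. The paper's own argument is the single remark ``As distinct $X$-blocks are disjoint, we obtain'' the corollary; your confinement claim is precisely the detail needed to turn that remark into a proof, since each $X$-block is clopen in $\Fix X\supset\Fix G$ and hence traps any component of $\Fix G$ it meets.
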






Let $\Z \aa$ denote the set of common zeros of a set $\aa\subset\vv (M)$. 
\begin{corollary}               \mylabel{th:MAINcor2}
Let $\aa \subset \vv^1 (M)$ be a finite-dimensional linear subspace
tangent to $\p M$ and forming a nilpotent Lie algebra under the Lie
bracket operation.  If $K$ is an essential block of zeros for some
$X\in \gg$, then $\Z \aa \ne\varnothing$. \qed
\end{corollary}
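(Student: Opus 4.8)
The plan is to realize $\aa$ as the Lie algebra of one-parameter subgroups of a nilpotent Lie group acting locally on $M$, and then to invoke Theorem \ref{th:MAIN}. Since $\aa$ is a finite-dimensional nilpotent Lie algebra over $\RR$, Lie's third theorem provides a connected, simply connected nilpotent Lie group $G$ whose Lie algebra $\gg$ is canonically identified with $\aa$. Each $Y\in\aa$, viewed both as a one-parameter subgroup $\RR\to G$ and as a $C^1$ vector field on $M$, generates a local flow on $M$: uniqueness of integral curves holds because the fields are $C^1$, and tangency to $\p M$ keeps orbits in $M$, so no orbit escapes through the boundary.

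Next I would integrate the infinitesimal action $\aa\hookrightarrow\vv^1 (M)$ to a local action $\Phi$ of $G$ on $M$. This is Palais's theorem on integrability of infinitesimal Lie group actions: a finite-dimensional Lie algebra of vector fields, each generating a local flow, integrates to a local action of the corresponding simply connected group, with the given fields as infinitesimal generators. Passing to a \emph{local} action is essential, since $M$ may be open and the fields need not be complete — precisely the generality in which Theorem \ref{th:MAIN} is stated. By construction, for $X\in\aa$ the local flow $\Phi^X$ induced by the one-parameter subgroup $X$ coincides with the local flow generated by the vector field $X$; hence $\Fix X=\Z X$, and more generally $\Fix G=\Z\aa$, because a point is stationary for the whole local action exactly when every infinitesimal generator vanishes there.

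Now apply the hypothesis. By assumption $K$ is an essential block of zeros of some $X\in\aa$, i.e.\ $K$ is an $X$-block with $\msf i_K (X)\ne 0$, so $K$ is an essential $X$-block for the local action $\Phi$ of the nilpotent group $G$. Theorem \ref{th:MAIN} then gives $\Fix G\cap K\ne\varnothing$, and since $\Fix G=\Z\aa$ this yields $\Z\aa\ne\varnothing$, as required.

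The main obstacle is the integration step: one must pass from the bracket relations among the vector fields to the commutation relations among the corresponding local flows, and do so with only $C^1$ regularity and in the presence of a boundary. I expect this to be handled by Palais's local integrability theorem, with the boundary dealt with by the tangency hypothesis — for instance by doubling $M$ across $\p M$, which extends $\aa$ without altering zero sets or indices inside $M$ — and with nilpotence of $\aa$ ensuring that the integrating group $G$ is itself nilpotent, so that Theorem \ref{th:MAIN} is indeed applicable.
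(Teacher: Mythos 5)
Your proposal is correct and follows exactly the route the paper intends (the corollary is stated with no separate proof precisely because it is this immediate consequence): integrate $\aa$ via Palais's theorem, cited in Section \ref{sec:local}, to a local action of the simply connected nilpotent group with Lie algebra $\aa$, apply Theorem \ref{th:MAIN}, and use $\Fix G=\Fix\gg=\Z\aa$ (Proposition \ref{th:fixg}). Your attention to the $C^1$/boundary issues is reasonable but not a departure from the paper's argument.
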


The inspiration for Theorem \ref{th:MAIN} is a remarkable result of C. Bonatti
\cite{Bonatti92}:

 \begin{theorem*}[\sc {Bonatti}]
Assume $\dim M\le 4$, $\p M=\emp$, and $X$, $Y$ are commuting analytic
vector fields $M$. If $K$ is an essential block for the local flow
generated by 
$X$, then $\Z Y\cap K\ne\varnothing$.\footnote{
``The demonstration of this result involves a beautiful and quite
  difficult local study of the set of zeros of $X$, as an analytic
  $Y$-invariant set.  Of course, analyticity is an essential tool in
  this study, and the validity of this type of result in the smooth
  case remains an open— and apparently hard— question.'' ---P. Molino
  \cite{Molino93}}
  \end{theorem*}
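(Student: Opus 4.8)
The plan is to argue by contradiction, converting the essentiality hypothesis $\msf i_K(X)\ne 0$ into a statement that the local analytic geometry of $\Z X$ near $K$ will be shown to forbid once $Y$ is assumed nonvanishing on $K$. So suppose $\Z Y\cap K=\emp$. Since $K$ is compact and $Y$ is analytic, $Y$ is then nonvanishing on a neighborhood of $K$. Commutativity $[X,Y]=0$ means the local flow $\psi_s:=\Phi^Y_s$ satisfies $(\psi_s)_*X=X$; in particular $\psi_s$ carries $\Z X$ into itself, so $\Z X$ is a $Y$-invariant real-analytic set and $Y$ is a nowhere-zero vector field tangent to $\Z X$ along $K$. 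The goal is to prove that these hypotheses force $\msf i_K(X)=0$, contradicting the assumption that $K$ is essential.

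First I would record the local normal form that drives everything. Near any point $p\in K$ the flow-box theorem gives coordinates with $Y=\p/\p x_1$, and the identity $[X,Y]=0$ forces every component of $X$ to be independent of $x_1$. Hence $X$ is locally a suspension of the analytic ``transverse'' field $X_\perp$ on a slice $\Sigma\cong\R{n-1}$ (with $n=\dim M\le 4$, so $\dim\Sigma\le 3$), and $\Z X$ is locally the cylinder $(-\eps,\eps)\times(\Z{X_\perp}\cap\Sigma)$. The significance is twofold: it exhibits $\Z X$ near $K$ as an analytic set laminated by $Y$-orbits, and --- because the Gauss map $X/\lvert X\rvert$ factors through the projection that forgets $x_1$ --- it shows that the local Poincar\'e--Hopf contribution of such a suspended piece is degenerate in the $Y$-direction.

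Next I would pass from the local picture to a global one using real-analyticity. I would invoke the standard fact that, for small $t>0$, $\msf i_K(X)=I(\Phi^X_t|U)$ equals the Poincar\'e--Hopf index of $X$ on the isolating neighborhood $U$, and then compute this index through an analytic stratification $\Z X\cap U=\bigsqcup_j S_j$ into finitely many analytic manifolds. Because $\psi_s$ is analytic and preserves $\Z X$, it permutes the strata preserving dimension, and the nonvanishing field $Y$ is tangent to every stratum meeting $K$. The plan is to organize the index of $X$ as a sum of contributions attached to the strata and their incidences, and to show, using the suspension normal form orbit-by-orbit, that the nowhere-zero tangent field $Y$ kills each contribution; the dimension bound $\le 4$ is used to keep the transverse slices of dimension $\le 3$ and to bound the number and codimension of strata so that the incidence analysis stays tractable.

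The hard part will be exactly this last step --- the ``beautiful and quite difficult local study'' flagged by Molino. The suspension observation makes each $Y$-orbit contribute nothing in isolation, but $\msf i_K(X)$ is a global invariant of the whole block, and the real work is controlling how the strata of $\Z X$ fit together near its singular locus so that the local vanishings assemble to $\msf i_K(X)=0$ rather than leaving a nonzero boundary or incidence term. This is where analyticity is indispensable: finiteness of the stratification, the curve-selection lemma, and the resulting control of $Y$-orbit asymptotics all fail in the smooth category, which is precisely why the smooth analogue remains open. Establishing the stratified vanishing in dimensions $3$ and $4$, including the delicate cases where low-dimensional strata limit onto higher ones, is the core of the argument.
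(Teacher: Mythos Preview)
The paper does not prove Bonatti's Theorem. It is quoted, with attribution to \cite{Bonatti92}, purely as background and inspiration for the paper's own Theorem~\ref{th:MAIN}; the only commentary offered is Molino's review sentence in the footnote. There is therefore no ``paper's own proof'' of this statement to compare your proposal against.

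As for the proposal itself: it is an outline, not a proof, and you say so yourself. The flow-box reduction to a transverse field on a slice of dimension $\le 3$, the $Y$-invariance of $\Z X$, and the plan to stratify $\Z X$ analytically and show each stratum contributes zero index --- all of this is a reasonable sketch of the \emph{shape} Bonatti's argument is reported to have (cf.\ the Molino quote). But the entire content of the theorem lives in the step you flag as ``the hard part'': controlling how the strata of $\Z X$ meet and showing the local index vanishings assemble globally. You have not supplied that step; you have only named it. In particular, the assertion that the Gauss map ``factors through the projection that forgets $x_1$'' gives local degeneracy, but the claim that this forces the \emph{global} Poincar\'e--Hopf index over $U$ to vanish needs a genuine argument about how the flow-box charts patch and how the singular strata of $\Z X$ behave --- exactly the ``quite difficult local study'' that constitutes Bonatti's paper. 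So the proposal identifies the right ingredients but does not close the gap, and the present paper makes no attempt to close it either.
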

This is one of the few fixed points theorem  for 
noncompact Lie groups ($\R 2$ in this case)  acting on 
manifolds that are not compact, or have dimensions $>2$, or have zero
Euler characteristic.  Another is Borel's Fixed Point Theorem, stated
below.
 
\smallskip
When $\p M=\empty$, our definition of the index of $X$ in $U$ extends
Bonatti's definition for vector fields, which runs as follows.  Let
$X$ be a $C^1$ vector field on $M$ generating the local flow $\psi$.
Let $U\subset M$ be an isolating neighborhood of an $X$-block $K$.
Then $\msf i_K(X)$ equals the intersection number of $X(U)$ and the
image of the trivial field on $U$ in the tangent bundle of $M$, which
is Bonatti's definition.  Equivalently: If a vector field $Y$ is
sufficiently close to $X$ and $\Z Y\cap U$ is finite, then $\msf
i_K(X)$ equals the the sum of the Poincar\'e-Hopf indices of the zeros
of $Y$ in $U$.  This sum, the {\em Poincar\'e-Hopf index} of $Y|U$, is
denoted by $\msf i_{PH} (Y|U)$.  See Proposition \ref{th:fundA}.

\subsection*{Discussion}
 Plante's theorem does not extend to Lie groups that are solvable, or
 even supersoluble,\footnote
{Supersoluble:  All eigenvalues in the adjoint representation are
  real. This implies solvable.}  because 
he   proved  \cite{Plante86}:
 
 \begin{itemize}      
\item Every compact surface  supports  a fixed-point free $C^\infty$ action by
  the  group  $H$ of real matrices
$ 
 \left[\begin{smallmatrix} 
   a & b\\ 
   0 & 1
  \end{smallmatrix}\right], \ (a >0)$. 
 \end{itemize}
 
On the other hand, sufficiently strong assumptions imply  fixed-point
theorem for several  natural classes of solvable group actions: 

 \smallskip
  $\bullet$ \ There is a fixed point in every algebraic action of a
 solvable, linear, irreducible algebraic group on a complete algebraic
 variety over an algebraically closed field (Borel \cite{Borel56,
   Borel69}; see also Humphreys \cite{Humphreys75}, Onishchik \&
 Vinberg \cite{Onish-Vinberg90}.)
This celebrated result needs no assumptions on dimensions, Euler
characteristics or compactness, and is valid for nonsmooth varieties.
 
\smallskip
$\bullet$ Borel's theorem extends to
holomorphic actions of connected solvable Lie groups on compact
K\"ahler manifolds $M$ with  $H^1 (M)=\{0\}$ ( A. Sommese \cite{Sommese73}).

\smallskip
  $\bullet$   $\Fix
G\ne\varnothing$ when $G$ is  supersoluble
 and acts analytically on a compact surface $M$ with $\chi (M) \ne 0$
(A. Weinstein and  M.\ W. Hirsch \cite{HW2000}).
But  this result fails for groups that arx
solvable but not supersoluble: The group of real matrices \,
$ 
 \left[\begin{smallmatrix} 
 a	 &  -b    & \  x \\ 
 b       &   a     & \ y \\
 0	 &  \ 0    & \ 1
  	\end{smallmatrix}\right], \ (a^2+b^2=1), \,
$
acts without fixed point on the 2-sphere of oriented lines through the origin
in $\R 3$.   And it  fails for $C^\infty$
actions, by Plante's Theorem. 

\smallskip
$\bullet$ \
 The conclusion of Bonatti's theorem holds for analytic vector fields $X, Y$ on a real or complex
$2$-manifold $M$ without boundary, satisfying $[X, Y]\wedge X=0$
 (Hirsch \cite{Hirsch2014}).   
 Two applications follow:

\smallskip
  $\bullet$ \ Let $\aa$ be a Lie algebra, perhaps infinite dimensional,
of analytic vector fields on a real or complex $2$-manifold with
empty boundary.  If $X\in \aa$ spans a one-dimensional ideal, $\Z \aa$
meets every essential $X$-block.

\smallskip
$\bullet$ \ Assume the center of $G$ has  positive
dimension and  $M$ is  a complex $2$-manifold with empty boundary
such that $\chi (M)\ne 0$.  Then every holomorphic action of $G$ on $M$ has a
fixed point.  

\smallskip
M.  Belliart \cite{Belliart97} classified the pairs $(G, M)$ where $G$
has a continuous fixed-point free action on a compact
surface $M$, relying on the classification of transitive surface
actions in Mostow \cite{Mostow50}.  In particular:

\smallskip
  $\bullet$ A solvable  $G$  acts without fixed point  on the
compact surface $M$ iff $G$ maps 
homomorphically onto $\msf {Aff}_+(\RR)$.
 
\smallskip
For related  work on the dynamics of Lie group actions see
\cite{BL96, ET79, Hirsch2010,  Hirsch2011, Horne65,  Hounie81}.

\paragraph{Open questions.} Is there an example of a connected nilpotent Lie group
  acting without fixed-point on a compact $n$-manifold, $n>2$, having
  nonzero Euler characteristic?  Does Bonatti's Theorem generalize
  to three or more vector fields,  or to
  manifolds of dimensions $ >4$?  

\section{Local actions}   \mylabel{sec:local}

For a map $f\co A\to B$ we adopt the convention that  notation such as 
$f(\xi)$ presupposes that $\xi$ is an element or subset of $A$.
 The domain $A$ of $f$ is denoted by $\mcal D f$ and the  range of $f$
 by  $\mcal Rf:=f(A)$. 

Let $g, \phi$ denote maps.  Regardless of the domains and ranges, the
composition $g\circ f$ is defined as the  map
(perhaps empty)
$g\circ f: x\mapsto g (f (x))$
whose  domain is $f^{-1}(\mcal D g)$.

The associative law holds for these compositions:  The maps 
$ (h\circ g)\circ f$ and $h\circ (g\circ f)$ have the same  domain  
\[D:= \{x\in \mcal D h\co h (x)\in \mcal D g, \quad g (h(x))\in
 \mcal D f\},
\]
and 
\[ x\in D\implies (h\circ g)(f(x)) =
h\circ ((g\circ f)(x)).
\]


A {\em local homeomorphism} on a topological space $Q$ is a
homeomorphism between open subsets of $Q$.  The set of these homeomorphisms
is denoted by $\msf {LH}(Q)$. 
\begin{definition}              \mylabel{th:defloc}
 A {\em local action} of the connected  Lie group $G$ on a manifold $M$ is a
 triple $(G, M,  \alpha)$
where 
$\alpha\co G \to
\msf {LH}(M)$
is a function 
having the following properties:
 
\begin{itemize}
\item
The set $\Omega_\alpha:=\big\{(g, p)\in G\times M\co p\in \mcal D
\alpha(g)\big\}$
is an open neighborhood of $\{e\} \times M$.

\item  The {\em evaluation map}
\[
\msf{ev_\alpha}\co \Omega_\alpha \to M,\quad (g, p)\mapsto \alpha(g) (p)
\]
is continuous. 

\item
 $\alpha(e)$ is the identity map of $M$.

\item  The maps $\alpha (fg)\circ \alpha (h)$,  $\alpha (f)\circ
  \alpha (gh)$ agree on the intersection of their domains, \
  ($f,g,h\in G$). 

\item
 $\alpha(e)$ is the identity map of $M$.

\item $\alpha (g^{-1})=\alpha (g)^{-1}$.
\end{itemize}
\end{definition}

$\alpha$  is a {\em global action} provided
$\Om_\alpha=G\times M$.  
 If $G$ is connected and simply
connected and $M$ is compact, every local action extends to a unique
global action.

In the rest of this section a local action $(G, M, \alpha)$ is
assumed.  We sometimes omit the notation ``$\alpha$'', writing $g$ for
$\alpha (g)$, $\mcal D g$ for $\mcal D \alpha(g)$, and so forth.

  A homomorphism $\phi\co H
\to G$ of Lie groups induces the local action $\alpha\circ\phi$ of $H$
on $M$ defined by
\[\alpha\circ \phi \co h\to \alpha (\phi(h)),
\]
called the {\em pullback} of $\alpha$ by $\phi$.  When $\phi$ is an
inclusion we set $\alpha|H:=\alpha\circ \phi$.  In this way $\alpha$
induces local actions of all Lie subgroups.

A {\em local flow} is a local action of the group $\RR$ of real
numbers. 
If $X\co \RR\to G$ is a
one-parameter subgroup, \, $\alpha \circ X$ is the local flow
characterized by 
\begin{equation}                \label{eq:pullback}
(\alpha\circ X) (t)\co p \mapsto \alpha(X(t))(p), \quad
( t\in \RR, \ p\in \mcal D \alpha (X(t)). 
\end{equation}

A Lie algebra homomorphism 
$X\mapsto \hat X$  from
$\gg$ to  $C^\infty$ vector fields on $M$ gives rise
to a local action $(G, M, \alpha)$ such that the 
maps $t\mapsto (\alpha\circ X (t))$ are the 
integral curves of
$\hat X$.
 (See
Palais \cite[{Th.\ }II.11]{Palais57}, also Varadarajan \cite
[{Th.\ }2.16.6]{Varadarajan76}).

A set $S\subset M$ is {\em invariant}  provided  
$g(S)$ is defined and in $S$ for all $g\in G$, or more equivalently:
\[S\subset \mcal D g \cap \mcal R g. \]  
The {\em orbit}  of $p\in M$ is the smallest invariant set
containing $p$.    
The {\em fixed-point set} of $\alpha$  is 
\[\textstyle
\Fix G:=\bigcap_{g\in G}\Fix g.
\]

\begin{proposition}             \mylabel{th:fixg}
$\Fix \gg = \Fix G$.
\end{proposition}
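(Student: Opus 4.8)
The plan is to prove the two inclusions $\Fix G\subseteq\Fix\gg$ and $\Fix\gg\subseteq\Fix G$ separately. The first is immediate from the definitions: each one-parameter subgroup $X\co\RR\to G$ takes values $X(t)\in G$, so a point of $\Fix G$ is fixed by every $\alpha(X(t))$ and hence lies in $\Fix{\alpha\circ X}=\Fix X$; intersecting over all $X\in\gg$ gives $\Fix G\subseteq\Fix\gg$. For the reverse inclusion I would first repackage the left-hand side: since $\Fix X=\bigcap_{t\in\RR}\Fix{\alpha(X(t))}$ and the elements $X(t)$, as $X$ ranges over $\gg$ and $t$ over $\RR$, fill out precisely the image $E\subseteq G$ of the exponential map, we get $\Fix\gg=\bigcap_{h\in E}\Fix{\alpha(h)}$. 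So it suffices to prove that a point fixed by $\alpha(h)$ for every $h\in E$ is also fixed by $\alpha(g)$ for every $g\in G$. Two standard structural facts will be used: $E$ contains an open neighbourhood of $e$ (the exponential map is a local diffeomorphism at the origin of $\gg$), and, $G$ being connected, the subgroup generated by $E$ --- open, hence also closed and nonempty --- is all of $G$.

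Fix a point $p\in\Fix\gg$ and set $N:=\{g\in G\co p\in\mcal D\alpha(g)\text{ and }\alpha(g)(p)=p\}$; the goal is $N=G$. I would establish three properties of $N$ in turn. First, $E\subseteq N$: for $h=X(1)$ we have $p\in\mcal D\alpha(h)$ and $\alpha(h)(p)=p$ because $p\in\Fix{\alpha\circ X}$, and in particular $N$ contains a neighbourhood of $e$. Second, $N=N^{-1}$: if $\alpha(g)(p)=p$, then using $\alpha(g^{-1})=\alpha(g)^{-1}$ the point $p$ lies in $\mcal R\alpha(g)=\mcal D\alpha(g^{-1})$ and $\alpha(g^{-1})(p)=\alpha(g)^{-1}(p)=p$. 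Third, $N$ is open: given $a\in N$, openness of $\Omega_\alpha$ at $(a,p)$ furnishes a product neighbourhood $W\times O\subseteq\Omega_\alpha$ with $a\in W$ and $p\in O$, so $(av,p)\in\Omega_\alpha$ for all $v$ in the neighbourhood $\{v\co av\in W\}$ of $e$; shrinking this neighbourhood so that moreover $v\in E$ (whence $\alpha(v)(p)=p$), the composition axiom applied to the two maps $\alpha(a)\circ\alpha(v)$ and $\alpha(av)$, both defined at $p$, forces $\alpha(av)(p)=\alpha(a)(\alpha(v)(p))=\alpha(a)(p)=p$. Hence $aV\subseteq N$ for a suitable neighbourhood $V$ of $e$, so $N$ is a neighbourhood of each of its points.

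It remains to deduce $N=G$, and this is where the substance lies. Formally it is a connectedness argument: writing an arbitrary $g\in G$ as a finite product $X_1(1)\cdots X_k(1)$ of elements of $E$ and inducting on $k$, or equivalently joining $e$ to $g$ by a path obtained by concatenating arcs of the form $s\mapsto g_0X(s)$ and showing that the $N$-preimage of the path is all of $[0,1]$. In either version the single point to verify is that $a\in N$ and $h\in E$ imply $ah\in N$; the value then takes care of itself, since $\alpha(ah)(p)=\alpha(a)(\alpha(h)(p))=\alpha(a)(p)=p$ by the composition axiom --- \emph{once one knows} that $p\in\mcal D\alpha(ah)$. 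Securing this domain membership is the main obstacle: the composition axiom as stated only asserts agreement of $\alpha(ah)$ and $\alpha(a)\circ\alpha(h)$ where both are defined, and does not by itself put $(ah,p)$ into $\Omega_\alpha$. Put differently, one must show that as one slides $e$ to $g$ the partial products never leave the open set $\Omega_\alpha$ above $p$ --- that the relevant preimage in $[0,1]$ is closed as well as open. I would attack this by propagating domain membership along the arc in small steps, each step afforded by the openness of $\Omega_\alpha$ at the point already reached and by continuity of $\msf{ev}_\alpha$, using compactness of the arc to make finitely many steps of uniform size suffice. This bookkeeping is the crux; once it is in place the fixed-point property propagates from $e$ to $g$, giving $N=G$ and hence $\Fix\gg\subseteq\Fix G$, and the remaining ingredients are purely formal.
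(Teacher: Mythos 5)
Your overall route is the same as the paper's, which disposes of the proposition in two sentences: the pullback formula gives the easy inclusion, and for the other one ``a neighborhood of $e$ is covered by one-parameter subgroups and generates $G$ because $G$ is connected.'' Your easy inclusion, your identification of $\Fix\gg$ with the common fixed set of $\alpha(h)$ for $h$ in the image $E$ of $\exp$, and your observations that the set $N=\{g\co p\in\mcal D\,\alpha(g),\ \alpha(g)(p)=p\}$ contains $E$, is symmetric, and is open, are all correct. But the step you yourself single out as the crux --- propagating the membership $p\in\mcal D\,\alpha(\cdot)$ all the way along a path from $e$ to $g$ --- is left as a plan, and the plan does not close it: the neighborhood supplied by openness of $\Omega_\alpha$ at $(a,p)$ has a size depending on $a$, and to extract uniform step sizes from compactness of the arc you would need to know in advance that the points of the arc (or of the closure of the ``good'' parameter set) already lie over $p$ in $\Omega_\alpha$, which is exactly what is to be proved. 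In other words, $N$ is open and contains a neighborhood of $e$, but nothing in the axioms makes it closed or a subgroup, so connectedness alone does not force $N=G$.

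The obstruction is genuine, not just bookkeeping. With the definition of local action exactly as stated, take a global action with a global fixed point $p$ and a group element $g_0$ not in the image of $\exp$ (e.g.\ $G=SL(2,\RR)$ acting linearly on $\R 2$, $p$ the origin, $g_0$ of trace $<-2$), and restrict the action by deleting $p$ from the domains of $\alpha(g_0)$ and $\alpha(g_0^{-1})$ only: all the axioms survive (the compositions agree wherever defined because all maps are restrictions of the global ones), $p\in\Fix\gg$, yet $p\notin\mcal D\,\alpha(g_0)$, so $p\notin\Fix G$. Hence the hard inclusion cannot follow from openness-plus-connectedness alone; some further input is needed. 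In the setting the paper actually uses it, that input is surjectivity of the exponential map for connected nilpotent groups (or any exp-surjective $G$): then every $g\in G$ equals $X(1)$ for some one-parameter subgroup $X$, so $p\in\Fix\gg$ gives $p\in\Fix{\alpha(g)}$ at once, with no generation argument and no domain issue; alternatively one must build into the definition of local action a condition guaranteeing that $\alpha(gh)(p)$ is defined whenever $\alpha(g)(\alpha(h)(p))$ is (as in Palais's treatment). To be fair, the paper's own proof is the same sketch as yours and never addresses the domain subtlety --- you located the right difficulty; with the strategy you chose, and in the stated generality, it cannot be resolved.
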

\begin{proof}  Equation (\ref{eq:pullback}) implies  $\Fix \gg \subset
  \Fix G$.  A neighborhood  of $e$ is covered by one-parameter
  subgroups, and it generates $G$ because $G$
  is connected.  This implies $\Fix G\subset \Fix \gg$.
\end{proof}
If $H\subset \gg$ is  a connected Lie subgroup
we set 
\[\Fix H=\Fix{\alpha|H}.
\]

\begin{proposition}             \mylabel{th:hg}
Let $G$ have a local action. 
\begin{description}

\item[(i)] If  $p\in \Fix h\cap\mcal Dg$,   then
$g(p)\in  \Fix{ghg^{-1}}.$

\item[(ii)] If $H\subset G$ is a connected normal Lie subgroup, $\Fix
  H$ is invariant under $G$.

\end{description}
\end{proposition}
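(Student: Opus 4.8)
Both statements are formal consequences of the axioms in Definition~\ref{th:defloc}; the only subtlety is that, $\alpha$ being a \emph{local} action, each composite that appears must be shown to be defined at the point where it is evaluated. For (i), set $q:=g(p)=\alpha(g)(p)$, which makes sense since $p\in\mcal D g$. The inverse axiom gives $\alpha(g^{-1})(q)=\alpha(g)^{-1}(q)=p$, and $p\in\Fix h$ gives $\alpha(h)(p)=p$, so
\[
\bigl(\alpha(g)\circ\alpha(h)\circ\alpha(g^{-1})\bigr)(q)=\alpha(g)\bigl(\alpha(h)(p)\bigr)=\alpha(g)(p)=q ;
\]
in particular $q$ lies in the (open) domain of $\alpha(g)\circ\alpha(h)\circ\alpha(g^{-1})$. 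Next I would apply the compatibility axiom twice, each time with the third group element equal to $e$: first to match $\alpha(g)\circ\alpha(h)$ with $\alpha(gh)$, then to match $\alpha(gh)\circ\alpha(g^{-1})$ with $\alpha(ghg^{-1})$; this shows that $\alpha(ghg^{-1})$ agrees with $\alpha(g)\circ\alpha(h)\circ\alpha(g^{-1})$ on the intersection of their domains. Finally, using that $\Omega_\alpha$ is an open neighbourhood of $\{e\}\times M$, one checks that $q\in\mcal D\alpha(ghg^{-1})$; the agreement just established then forces $\alpha(ghg^{-1})(q)=q$, i.e. $g(p)\in\Fix{ghg^{-1}}$.

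For (ii), let $p\in\Fix H$ and $g\in G$. For each $h\in H$, normality of $H$ gives $g^{-1}hg\in H$, hence $p\in\Fix{g^{-1}hg}$ because $p\in\Fix H$; applying part (i) with $g^{-1}hg$ in place of $h$ gives $g(p)\in\Fix{g(g^{-1}hg)g^{-1}}=\Fix h$. Since $h\in H$ is arbitrary, $g(p)\in\bigcap_{h\in H}\Fix h=\Fix H$, and running the same argument with $g^{-1}$ in place of $g$ (using $ghg^{-1}\in H$) shows $g^{-1}(p)\in\Fix H$ whenever $p\in\mcal R g$. Combined with the inclusion $\Fix H\subset\mcal D g\cap\mcal R g$ for every $g\in G$ --- which I would deduce from the openness of $\Omega_\alpha$ together with the connectedness of $G$, in the spirit of the proof of Proposition~\ref{th:fixg} --- this is exactly the statement that $\Fix H$ is invariant under $G$.

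The algebra in both parts is immediate, and (ii) is a bookkeeping consequence of (i). The single genuine point is the domain bookkeeping inside (i): verifying that $\alpha(ghg^{-1})$ is in fact defined at $g(p)$, so that the assertion ``$g(p)$ is fixed by the composite $\alpha(g)\circ\alpha(h)\circ\alpha(g^{-1})$'' can be upgraded to ``$g(p)\in\Fix{ghg^{-1}}$''. I expect this to require nothing beyond the openness of $\Omega_\alpha$ and the compatibility axiom, with everything else purely formal.
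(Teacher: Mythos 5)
The paper's own proof of this proposition is the single sentence that (i) is straightforward and implies (ii), and the algebraic core of your argument --- showing that the composite $\alpha(g)\circ\alpha(h)\circ\alpha(g^{-1})$ fixes $q:=g(p)$ and matching it with $\alpha(ghg^{-1})$ by applications of the compatibility axiom with third element $e$, then deducing (ii) from (i) via normality --- is exactly that straightforward computation. The gap lies in the step you yourself single out as the ``single genuine point'': the claim that openness of $\Omega_\alpha$ lets you check $q\in\mcal D\,\alpha(ghg^{-1})$, and likewise the blanket inclusion $\Fix H\subset\mcal D g\cap\mcal R g$ you invoke in (ii). Neither follows from Definition~\ref{th:defloc}, and both are false for general local actions: the compatibility axiom only asserts agreement on the \emph{intersection} of domains and never forces $\alpha(ghg^{-1})$ to be defined where the triple composite is. For a counterexample to the first claim, take the global rotation action of $\RR$ on the circle and shrink domains: let $\mcal D\,\alpha(t)$ be a fixed proper open arc $V$ for $t\ge 2\pi$ (with the forced mirror choice $\mcal D\,\alpha(t)=\beta(-t)(V)$ for $t\le-2\pi$, and full domains for $|t|<2\pi$). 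All axioms of Definition~\ref{th:defloc} hold, $\Fix{\alpha(2\pi)}=V$, and a small rotation $g$ carries some $p\in V$ outside $V=\mcal D\,\alpha(ghg^{-1})$ (the group being abelian, $ghg^{-1}=h=2\pi$), so $g(p)\notin\Fix{ghg^{-1}}$ in the literal sense. For the second claim, let $G=\RR^2$ act on $\RR$ with one factor acting trivially and the other by the incomplete local flow of $x^2\,\partial/\partial x$, and let $H$ be the trivially acting factor: then $\Fix H=\RR$ while $\mcal D g\ne\RR$ for most $g$, so no appeal to openness of $\Omega_\alpha$ and connectedness of $G$ in the spirit of Proposition~\ref{th:fixg} can yield the inclusion.

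Consequently the strong literal form you aim at is not provable from the stated axioms; the proposition must be read under the paper's standing convention that notation presupposes definedness, with invariance taken in the corresponding weak sense: (i) asserts that $\alpha(ghg^{-1})$ fixes $g(p)$ whenever it is defined there, and (ii) asserts $g(p)\in\Fix H$ whenever $p\in\Fix H\cap\mcal D g$. With that reading your computation is complete and coincides with the proof the paper intends --- though note that even transporting the value along your chain quietly uses a further definedness of the same presupposed kind, namely $p\in\mcal D\,\alpha(gh)$ (or $q\in\mcal D\,\alpha(hg^{-1})$ on the alternative route). The cleanest repair is to drop the attempted appeals to openness of $\Omega_\alpha$, state both parts in the ``wherever defined'' form, and observe that for the local actions actually used later in the paper (those integrating a Lie algebra homomorphism, or pullbacks of global actions) these domain issues are harmless.
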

\begin{proof} (i) is straightforward and implies (ii).
\end{proof}

\section{The fixed-point index}   \mylabel{sec:index}

The late A. Dold \cite {Dold65, Dold72} defined a fixed-point index
for a large class of maps having compact fixed-point
sets.  We use Dold'sindex  to define an index for blocks in local flows.
Dold's index $I (f)\in\ZZ$ is defined for data $f, V, M, \mcal S$ where

\begin{itemize}
\item $V$ is an open set in a topological space $\mcal S$,

\item $f\co V \to \mcal S$ is  continuous with $\Fix$ compact,

\item $V$ is a  {\em Euclidean neighborhood retract} (ENR): 
Some open set in a Euclidean space retracts onto  onto a homeomorph  of $V$.\footnote{The class of ENRs 
  includes metrizable topological manifolds and triangulable subsets
  of Euclidean spaces.} 
\end{itemize}
We will use the following  properties of $I(f)$:
\begin{description} 

\item{(D1)} \, $I(f)=I(f|V_0)$\, if $V_0\subset V$ is an open
  neighborhood of   $\Fix f$.  

\item{(D2)} \,  $I(f)=\begin{cases}
& 0 \ \mbox{if   $\Fix f =\empty$,}\\
& 1 \ \mbox{if   $f$ is constant.}
\end{cases}
$

\item{(D3)} \,  $I(f)=\sum_i I(f| V_i)$\, if $V$ is the 
  union of finitely many disjoint open sets $V_i$.

\item{(D4)} \,  $I(f_0)=I(f_1)$\, if there is a homotopy
  $f_t\co V\to \mcal S,\, (0\le t \le 1)$\, such that \,$\bigcup_t\Fix{f_t}$ is
  compact.
\end{description}
These correspond to (5.5.11), (5.5.12), (5.5.13) and
  (5.5.15) in  Chapter VII of Dold's book \cite{Dold72}.   

\begin{description} 
\item{(D5)} \,  Assume $\mcal S$ is a manifold, $f$ is $C^1$, \ $\Fix
  f$ is a singleton $p\in 
  M \,\verb=\=\, \p M$, and $\Det f'(p)\ne 0$.  Then
$I(f)= (-1)^\nu$, 
where $\nu$ is the number of distinct eigenvalues $\lam$ of $f'(p)$ such that $\lam
>1$.   (See  \cite[VII.5.17, Ex. 3]{Dold72}).)

\item{(D6)} \, Suppose   $\mcal S$ is compact, $V=\mcal S$, and $f$ is
  homotopic to the identity. Then $I
  (f)=\chi (\mcal S)$.  (See \cite[VII.6.22]{Dold72}.)  

\end{description} 


\subsection*{The index for local flows}
Let $\varphi:=
\{\varphi_t\}_{t\in \RR}$ be a local flow in a topological
space $\mcal S$.  

A compact set $K\subset\Fix \varphi$ is a {\em block} for $\varphi$,
or a {\em $\varphi$-block}, if it has an open, precompact ENR
neighborhood $V\subset \mcal S$ such that $\ov V\cap\Fix\varphi=K$.
Such a $V$ is said to be {\em isolating} for $\varphi$, and for
$(\varphi, K)$.  When $\varphi$ is smooth this language agrees with
the terminology for $X$-blocks in the Introduction, and $\Fix \varphi
= \Z X$.

It turns out that the fixed-point index $I (\varphi_t|V)$ for all 
sufficiently small $t>0$ depends only on $\varphi$ and $K$:

\begin{proposition}           \mylabel{th:tau}
If   $V$ is isolating for $\varphi$, 
there exists $\tau >0$ such that for all $t \in (0,\tau]$:

\begin{description}

\item[(a)] $\Fix {\varphi_t}\cap V$ is  compact,

\item[(b)] $I (\varphi_t|V)=I (\varphi_{\tau}|V)$.

\end{description}
\end{proposition}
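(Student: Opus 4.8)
The plan is to reduce both assertions to the compactness of the ``total fixed-point set'' $F := \bigcup_{0 < t \le \tau} \Fix{\varphi_t} \cap \ov V$ for suitably small $\tau$, and then to invoke Dold's homotopy invariance (D4). First I would establish assertion (a). Since $V$ is isolating, $\ov V \cap \Fix\varphi = K \subset V$, so $K$ is disjoint from the compact frontier $\fr V = \ov V \setminus V$. I claim there is $\tau > 0$ with $\Fix{\varphi_t} \cap \fr V = \varnothing$ for all $t \in (0,\tau]$. If not, pick $t_n \downarrow 0$ and $p_n \in \Fix{\varphi_{t_n}} \cap \fr V$; by compactness of $\fr V$ pass to a convergent subsequence $p_n \to p \in \fr V$. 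The point $p$ need not lie in $\Fix\varphi$ a priori --- that is exactly the subtlety, since $\varphi_{t_n}(p_n) = p_n$ only controls the flow at the shrinking times $t_n$ --- so the argument needs the local-flow structure: using continuity of the evaluation map $\msf{ev}$ on the open set $\Om \supset \{0\}\times M$, together with the group law $\varphi_{t_n}^k = \varphi_{kt_n}$ valid as long as the orbit stays in the domain, one shows that $p$ is fixed by $\varphi_s$ for all $s$ in a neighborhood of $0$, hence (as in Proposition \ref{th:fixg}) $p \in \Fix\varphi$. This contradicts $p \in \fr V$ and $\ov V \cap \Fix\varphi = K \subset V$. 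Granting this, for $t \in (0,\tau]$ the set $\Fix{\varphi_t}\cap\ov V = \Fix{\varphi_t}\cap V$ is closed in the compact set $\ov V$, hence compact, which is (a); it also certifies that $I(\varphi_t|V)$ is defined.

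For assertion (b), fix $t_0, t_1 \in (0,\tau]$ and consider the straight-line homotopy of times $t_s := (1-s)t_0 + s t_1 \in (0,\tau]$, giving maps $H_s := \varphi_{t_s}|V \co V \to \mcal S$, which is continuous in $(s,p)$ by continuity of $\msf{ev}$. By (D4) it suffices to check that $\bigcup_{s\in[0,1]} \Fix{H_s} = \bigcup_{s}\big(\Fix{\varphi_{t_s}}\cap V\big)$ is compact. It is contained in the compact set $\ov V$, so I only need it to be closed. But every $\Fix{\varphi_{t_s}}\cap V$ avoids $\fr V$ by the claim above, so the union equals $\big(\bigcup_s \Fix{\varphi_{t_s}}\big)\cap \ov V$, and this is closed: if $q_n \to q$ with $q_n \in \Fix{\varphi_{s_n}}$ for times $s_n \in [\min(t_0,t_1),\max(t_0,t_1)]$, pass to $s_n \to s_\infty$ in that closed interval, and continuity of $\msf{ev}$ at $(s_\infty, q)$ gives $\varphi_{s_\infty}(q) = q$, so $q$ lies in the union. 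Hence (D4) applies and $I(\varphi_{t_0}|V) = I(\varphi_{t_1}|V)$; taking $t_1 = \tau$ gives (b).

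The main obstacle is the claim that times $t_n \downarrow 0$ with $\Fix{\varphi_{t_n}}\cap\fr V \ne \varnothing$ force a genuine $\varphi$-fixed point on $\fr V$ --- equivalently, that ``asymptotically almost-periodic'' behavior at vanishing times cannot persist on the frontier. This is where one must use that $\varphi$ is an honest local flow rather than a single self-map: the point is that $\varphi_{t_n}(p_n) = p_n$ iterates to $\varphi_{k t_n}(p_n) = p_n$ for all $k$ with the orbit staying in a fixed compact neighborhood of $p$, and since $kt_n$ fills out $[0,\delta]$ more and more finely while the flow is uniformly continuous there, the limit point $p$ is fixed on $[0,\delta]$, hence on all of $\RR$ by the subgroup argument. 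I would write this step carefully; the rest is bookkeeping with $\ov V$ compact and (D4).
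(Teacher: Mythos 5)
Your proposal is correct and follows essentially the same route as the paper: assertion (a) via a contradiction with times $t_n\searrow 0$ and fixed points accumulating on $\fr V$, and assertion (b) via the homotopy $s\mapsto \varphi_{(1-s)t_0+st_1}|V$ together with (D4). The iteration step you spell out (passing from $\varphi_{t_n}(p_n)=p_n$ to $\varphi_{kt_n}(p_n)=p_n$ and letting $kt_n$ fill an interval) is precisely the content compressed in the paper's one-line appeal to joint continuity of $(t,x)\mapsto\varphi_t(x)$, so you have simply written out the same argument in more detail.
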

\begin{proof}
If (a) fails there exist convergent sequences $\{t_k\}$ in  $[0,\infty)$,
and  $\{p_k\}$ in $V$,   such that
\[
 t_k\searrow 0, \quad
p_k\in \Fix{\varphi_{t_k}}\cap V, \quad p_k\to q\in \fr V.
\]       
Joint continuity of $(t,x)\mapsto \varphi_t (x)$ yields the contradiction
$q\in \Fix {\varphi} \cap \fr V$.  Assertion (b) is a
consequence of (a) and (D4).
\end{proof}

\begin{definition}              \mylabel{th:defeqindex}
Using  the notation of Proposition
\ref{th:tau} we  define the {\em  index}  of $\varphi$ in $V$, and at
$K$, as:
\[
 \msf i(\varphi,V)=\msf i_K (\varphi):= I(\varphi_\tau|V).
\]
$K$ and $V$ are  {\em essential} for $\varphi$ if $\msf i_K (V)\ne 0$.  
This implies $K\ne\varnothing$ by (D2). 
\end{definition}

 We say that 
 $\varphi$ is {\em smooth} and {\em generates $X\in \vv^1 (M)$}, provided
$\mcal S$ is a manifold $M$ and 
\[
 \left.\pd t\right|_{t=0} \varphi_t (p) = X_p,  \quad (p\in M).
\] 
This implies $X|\p M$ is tangent to $\p M$, and $\Z X=
\Fix{\varphi}$. 

Recall that $\msf i_{PH}(X)$ denotes the Poincar\'e-Hopf index for  vector
fields $X\in\vv(M)$ such that  $\p M=\emp$ and  $\Z X$ finite.  
\begin{proposition}             \mylabel{th:fundA}
Assume $\p M=\emp$.  Let  $\varphi$ be a smooth 
local flow on $M$ generating  $X\in\vv (M)$.
Suppose $V\subset M$ is isolating for $\varphi$.  Let $\{X^k\}$ be a
sequence in $\vv (M)$ converging to $X$ such that each set $\Z
{X^k}\cap \ov V$ is finite.   Then 
$\msf i (\varphi, V)= \msf i_{PH} (X^k|V)$ for sufficiently large $k$.
\end{proposition}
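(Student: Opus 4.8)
The plan is to compare $\msf i(\varphi, V) = I(\varphi_\tau|V)$ with the Poincar\'e-Hopf sum $\msf i_{PH}(X^k|V)$ by interpolating through the time-$\tau$ maps of the flows generated by the approximating fields $X^k$. First I would fix an isolating neighborhood $V$ for $\varphi$ and choose $\tau>0$ as in Proposition \ref{th:tau}, so that $\Fix{\varphi_\tau}\cap V = \Z X \cap \ov V = K$ and $\msf i(\varphi, V) = I(\varphi_\tau|V)$. Since $\Z X \cap \fr V = \varnothing$, there is a compact collar around $\fr V$ on which $X$ is bounded away from zero; by $C^0$-closeness of $X^k$ to $X$ (and standard continuous dependence of flows on the vector field, on the compact set $\ov V$ over the compact time interval $[0,\tau]$), for large $k$ the local flow $\varphi^k$ generated by $X^k$ satisfies $\varphi^k_t(p)\neq p$ for all $p\in \fr V$ and all $t\in(0,\tau]$. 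Hence $\bigcup_{t\in[0,1]}\Fix{((1-t)\varphi_\tau + t\varphi^k_\tau)}$ — or, more carefully, the fixed-point set of the homotopy obtained by pushing along the straight-line homotopy from $X$ to $X^k$ in a chart, or simply the homotopy $s\mapsto \varphi^{X^s}_\tau$ along $X^s := (1-s)X + sX^k$ — stays inside the compact set $\ov V$ and avoids $\fr V$, so it is compact. Property (D4) then gives $I(\varphi_\tau|V) = I(\varphi^k_\tau|V)$ for large $k$.

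Next I would reduce $I(\varphi^k_\tau|V)$ to a sum of local indices at the zeros of $X^k$. Since $\Z{X^k}\cap \ov V$ is finite, $\Fix{\varphi^k_\tau}\cap V$ is finite (any fixed point of $\varphi^k_\tau$ near a point where $X^k\neq 0$ is excluded for large $k$, after possibly shrinking $\tau$ so that no periodic orbit of $X^k$ of period $\le \tau$ meets $\ov V$ — this too follows from the $C^0$ bound on $X$ away from $\Z X$). By additivity (D3) over small disjoint coordinate balls around the points of $\Z{X^k}\cap V$, $I(\varphi^k_\tau|V) = \sum_{z} I(\varphi^k_\tau|B_z)$. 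It remains to identify each local term $I(\varphi^k_\tau|B_z)$ with the Poincar\'e-Hopf index of $X^k$ at $z$. This is a purely local fact: in a chart, the time-$\tau$ map of the flow of $X^k$ is homotopic, through maps with $z$ as the only fixed point in $B_z$, to the map $p\mapsto p + X^k(p)$ (rescale $\tau\to 0$ and use that $\tfrac1t(\varphi^k_t(p)-p)\to X^k(p)$ uniformly on $\overline{B_z}$), and the latter has index equal to the local degree of $X^k$ at $z$, i.e.\ $\msf i_{PH}$ of $X^k$ at $z$; when $z$ is nondegenerate this is exactly (D5). Summing, $I(\varphi^k_\tau|V) = \msf i_{PH}(X^k|V)$.

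Combining the two displays yields $\msf i(\varphi,V) = I(\varphi_\tau|V) = I(\varphi^k_\tau|V) = \msf i_{PH}(X^k|V)$ for all sufficiently large $k$, which is the claim.

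The main obstacle I anticipate is the homotopy-invariance step: one must produce a legitimate homotopy (in the sense required by (D4), with compact total fixed-point set) between $\varphi_\tau$ and $\varphi^k_\tau$, and separately between $\varphi^k_\tau$ and the ``Euler step'' map $p\mapsto p+X^k(p)$, while controlling that no \emph{spurious} periodic points of period $\le\tau$ appear on $\ov V$ during the interpolation. The clean way to do this is to run the whole argument with $\tau$ chosen small enough at the outset — small relative to the lower bound $\min_{\fr V}|X|$ and the geometry of $V$ — so that for every field $X^s$ on the segment from $X$ to $X^k$ (all of which are uniformly $C^0$-close to $X$ for large $k$), no point of $\fr V$ returns to itself in time $\le\tau$; then the union of the fixed-point sets over the homotopy lies in the compact set $K' := \{p\in\ov V : |X(p)|\le \epsilon\}$ for suitable $\epsilon$, hence is compact, and (D4) applies. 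Everything else is a routine assembly of (D1)--(D5) together with standard continuous dependence of ODE solutions on parameters.
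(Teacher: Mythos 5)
Your outline follows the same overall strategy as the paper (homotopy invariance (D4) to pass from $\varphi$ to the approximating flows, then (D3) plus a local identification with the Poincar\'e--Hopf index), but it has one genuine gap: you work throughout with ``the local flow $\varphi^k$ generated by $X^k$'' (and with flows of the interpolated fields $(1-s)X+sX^k$). In this paper $\vv(M)$ denotes general (continuous) vector fields --- only $\vv^1(M)$ is the $C^1$ class --- and the $X^k$ are assumed only to lie in $\vv(M)$ with $\Z{X^k}\cap\ov V$ finite. A merely continuous vector field has integral curves (Peano) but not uniqueness, so it need not generate any local flow; hence $\varphi^k_\tau$, the homotopy $s\mapsto\varphi^{X^s}_\tau$, and the local comparison $\tfrac1t(\varphi^k_t(p)-p)\to X^k(p)$ are not defined objects as you use them. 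This is precisely the point the paper's proof is built around: it first replaces each $X^k$ by a $C^1$ field $Y^k$ with $\Z{Y^k}\cap\ov V$ finite, chosen so close to $X^k$ that $\msf i_{PH}(Y^k|V)=\msf i_{PH}(X^k|V)$ while still $Y^k\to X$; the flows $\psi^k$ of the $Y^k$ then exist, converge to $\varphi$ uniformly on $[0,\rho]\times\ov V$, and (D4) is applied to $f_0=\varphi_t|V$, $f_1=\psi^k_t|V$ for small $t$.

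Once that index-preserving $C^1$ regularization is inserted, the rest of your argument goes through and in fact fills in details the paper leaves implicit: keeping the fixed-point sets of the homotopy away from $\fr V$ (so (D4) applies), shrinking $t$ to exclude spurious short-period orbits, additivity (D3) over small balls about the zeros, and the local homotopy from the time-$t$ map to the Euler step identifying the local fixed-point index with the local degree of the field --- the classical fact the paper uses silently when it equates the index of $\psi^k_t|V$ with $\msf i_{PH}(Y^k|V)$. So the architecture is right; the missing idea is the $C^1$ approximation that makes the flows you invoke actually exist.
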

\begin{proof} Choose a sequence  $\{Y^k\} $ in $\vv^1 (M)$   with $\Z{Y^k}\cap\ov
  V$ finite, with $Y^k$ and so close to $X^k$ that $\msf i_{PH} (Y_k|V)= \msf
  i_{PH} (X^k|V)$ and $\lim_k Y_k= X$.   
Let $\psi^k$ denote the local
flow of $Y^k$.  There exists $\rho >0$ such that
\[\mbox{$\lim_{k\to\infty} \psi^k_t (x) =\varphi_t (x)$ \ uniformly for 
$(t,x) \in [0,\rho]\times \ov V$.}
\]  
The conclusion  follows by applying (D4) to $f_0:=\phi_t|V$ and
$f_1:=\psi^k_t|V$ for sufficiently small $t>0$. 
\end{proof}
 This result can be used to show that the Dold index and the
Bonatti index coincide in situations where both are defined. 

\smallskip
Now assume  $G$ has a  local action on $M$. 
Every $X\in \gg$ generates a local flow $\varphi^X$ on $M$.  A block
$K\subset \Fix{\varphi^X}$ is called 
 an $X$-block.   When $U\subset M$ is $U$ is isolating for $\alpha\circ X$
we say $U$ is isolating for $X$, and set
\[ \msf i (X, U)= \msf i_K (X)  := \msf i (\varphi^X, U).
\]
$K$ is  {\em essential for $X$} provided $\msf i_K (X)\ne 0$.
 
\begin{proposition}               \mylabel{th:stable}
Assume  $V\subset M$ is isolating for $X$. 
\begin{description}

\item[(a)]  The set  
\[\mathfrak N (X, V,\gg):= 
\big\{Y\in \gg \co \text{$V$ is isolating for $Y$ and \,$\msf i (Y, V)
  = \msf i (X, V)$}\big\}
\] 
is an open neighborhood of $X$ in   $\gg$. 

\item[(b)] If $\ov V$ is a compact invariant manifold,  
  $\msf i (Y, V)=\chi (\ov V)$ \,for all $Y\in \mathfrak N (X, V,\gg)$.  
\end{description}
\end{proposition}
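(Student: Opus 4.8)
The plan is to deduce both assertions from the stability properties of the Dold index, principally (D4), together with Proposition \ref{th:tau}. For part (a), fix an isolating neighborhood $V$ for $X$, so $\ov V\cap\Fix{\varphi^X}=K$ and, by Proposition \ref{th:tau}, there is $\tau>0$ with $\Fix{\varphi^X_\tau}\cap V$ compact and $\msf i(X,V)=I(\varphi^X_\tau\,|\,V)$. The first step is to show that the condition ``$V$ is isolating for $Y$'' holds for all $Y$ near $X$ in $\gg$. This is where the joint continuity of the evaluation map $\msf{ev}_\alpha$ on the open set $\Omega_\alpha$ enters: the map $\gg\times\RR\times M\supset\Omega\to M$, $(Y,t,p)\mapsto\alpha(Y(t))(p)$, is continuous where defined, because a neighborhood of $e$ in $G$ is swept out by one-parameter subgroups depending continuously on $Y\in\gg$. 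If no neighborhood of $X$ consisted of $Y$ for which $V$ is isolating, one could extract sequences $Y^k\to X$ in $\gg$, $t_k\to 0$ in $\RR$ (or rather, one argues as in Proposition \ref{th:tau}) and $p_k\in\ov V$ with $\varphi^{Y^k}_s(p_k)=p_k$ for all $s$, $p_k\to q\in\fr V$; joint continuity then forces $q\in\Fix{\varphi^X}\cap\fr V$, contradicting that $V$ is isolating for $X$. This yields an open $W\ni X$ in $\gg$ on which $Y$ is isolated by $V$.

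The second step is to show, after shrinking $W$ if necessary, that the index is locally constant on $W$. Here I would fix a small $\tau>0$ as above and use (D4): the family $f_Y:=\varphi^Y_\tau\,|\,V$, parametrized by $Y\in W$, is a continuous family of self-maps of $V$, and the obstruction to applying homotopy invariance is precisely the requirement that $\bigcup_Y\Fix{f_Y}$ stay compact, i.e. stay away from $\fr V$, as $Y$ ranges over a path from $X$ to a nearby $Y'$. The same extraction-of-sequences argument as in the first step, applied now to $\Fix{\varphi^Y_\tau}$ rather than to $\Fix{\varphi^Y}$, shows that for $Y$ in a sufficiently small neighborhood $W'\subseteq W$ of $X$ the sets $\Fix{\varphi^Y_\tau}\cap\ov V$ remain in $V$ and their union (over a segment in $W'$) is compact. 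Applying (D4) along straight-line paths in $W'$ (which is legitimate since $\gg$ is a vector space and $W'$ may be taken convex) gives $I(\varphi^Y_\tau\,|\,V)=I(\varphi^X_\tau\,|\,V)$. One final point: $\msf i(Y,V)$ is defined as $I(\varphi^Y_{\tau_Y}\,|\,V)$ for $\tau_Y$ small enough depending on $Y$, but by Proposition \ref{th:tau}(b) this equals $I(\varphi^Y_\tau\,|\,V)$ as long as $\tau$ itself is admissible for $Y$, which holds after shrinking $W'$. Thus $W':=\mathfrak N(X,V,\gg)$ is open, proving (a).

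For part (b), assume in addition that $\ov V$ is a compact manifold invariant under $G$. Invariance means $\varphi^Y_t(\ov V)\subseteq\ov V$ for all $t$ and all $Y\in\gg$; in particular each $\varphi^Y_\tau$ restricts to a self-map of the compact manifold $\ov V$. Since $\varphi^Y_0=\mathrm{id}$ and the flow gives a path $t\mapsto\varphi^Y_t|\ov V$ in the space of self-maps, $\varphi^Y_\tau|\ov V$ is homotopic to the identity of $\ov V$. Moreover, because $V$ is isolating, $\Fix{\varphi^Y_\tau}$ is contained in the interior $V$, so by (D1) the index computed on $V$ equals the index computed on all of $\ov V$ — wait, more carefully: one takes $V$ slightly enlarged or uses that $\Fix{\varphi^Y_\tau}\cap\ov V\subset\mathrm{int}\,\ov V$, so $I(\varphi^Y_\tau|V)=I(\varphi^Y_\tau|\mathrm{int}\,\ov V)$. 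Then (D6), applied to the compact manifold $\mathrm{int}\,\ov V$ (or $\ov V$, with the identity-homotopic self-map), gives $I(\varphi^Y_\tau|\ov V)=\chi(\ov V)$, hence $\msf i(Y,V)=\chi(\ov V)$ for every $Y\in\mathfrak N(X,V,\gg)$.

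The main obstacle is the compactness control in step two: one must verify uniformly over a neighborhood of $X$ in $\gg$ that the fixed-point sets $\Fix{\varphi^Y_\tau}\cap\ov V$ cannot escape to $\fr V$, and this requires handling the interplay of three limits (in $Y$, in the flow time, and in the base point) within the domain $\Omega_\alpha$ of the local action, rather than in a global action. The compactness of $\ov V$ and the openness of $\Omega_\alpha$ around $\{e\}\times M$ are what make the sequential compactness argument go through.
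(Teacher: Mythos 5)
Your argument is correct and follows essentially the same route as the paper: openness of $\mathfrak N(X,V,\gg)$ from compactness of $\fr V$ plus joint continuity, constancy of the index along straight-line segments $(1-s)X+sY$ in $\gg$ via (D4) (with Proposition \ref{th:tau} reconciling the choice of $\tau$), and (b) from (D6) using that the flow makes $\varphi^Y_\tau|\ov V$ homotopic to the identity. Your treatment of the uniform compactness control (keeping $\Fix{\varphi^{Y}_t}\cap\ov V$ away from $\fr V$ for $Y$ near $X$ and $t$ small, with $\tau$ chosen jointly with the neighborhood) is in fact more detailed than the paper's proof, which asserts this implicitly.
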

\begin{proof}

{\em (a) } Compactness of $\fr V$ implies that the set 
$\mfrak N(\gg):=\{Y\in \vv (g) \co \Fix Y \cap \fr V =\emp$
 is an open
neighborhood of $X$, and  $V$ is isolating for every $Y\in \mfrak
N(\gg)$. 
If  $Y\in \mfrak N(\gg)$ 
is sufficiently close to $X$ and $0\le s\le 1$ then $Y_s:=  (1-s)X
+sY$ also lies in $\mfrak N (\gg)$, and therefore    
$\msf i (Y, V)
  = \msf i (X, V)$ by (D4).

{\em (b) } Follows from  (D6). 
\end{proof}

\section{Fixed point sets,  stabilizers and ideals}  \mylabel{sec:ideals}
As usual, $G$ denotes a connected  Lie group with Lie algebra
$\gg$.  When $G$ is nilpotent, its exponential map $\gg\to G$ is an
analytic diffeomorphism 
sending subalgebras onto closed subgroups, and ideals onto normal
subgroups. In some situations $\gg$ is more convenient than $G$
because it as a  natural linear structure.

 A local action  of $G$ on a surface $M$ is assumed.
Note that $\Fix G=\Fix \gg$ because $G$ is connected.
The {\em isotropy group of $p\in M$} is the subgroup $G_p\subset G$
  generated by
\[
 \{g\in G\co g(p)=p\}. 
\]
The {\em stabilizer of  $p\in M$} is the subalgebra $\gg_p\subset \gg$
generated by 
\[\big\{Y\in\gg\co p\in\Fix Y \big\}.
\]
The {\em stabilizer of $S\subset M$} is  $\gg_S:= \bigcap_{p\in S}\gg_p$.
Evidently a one-parameter subgroup $Y\co\RR\to G$ belongs to $\gg_p$
iff $Y (\RR)\subset G_p$. 

\begin{lemma}           \mylabel{th:covering}
If $\gg$ is nilpotent and $\dim (\gg) \ge 2$, 
every element of $\gg$  lies in an ideal of 
codimension one. 
\end{lemma}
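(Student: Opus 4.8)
The plan is to use the basic structure theory of nilpotent Lie algebras, exploiting that a nonzero nilpotent $\gg$ has a nontrivial center. First I would reduce to a statement about the quotient by the center. Fix $X\in\gg$. If $X$ lies in the center $\zz:=Z(\gg)$, then any codimension-one subspace $\aa$ containing $X$ is automatically an ideal, since $[\gg,\aa]\subset[\gg,\gg]$ and more to the point $[\gg,\aa]$ need not even be considered—wait, that's not quite enough, so let me be careful: if $X\in\zz$, pick any ideal $\bb$ of codimension one with $X\in\bb$. Such a $\bb$ exists because $\gg/\RR X$ is again nilpotent (a quotient of a nilpotent algebra), hence has a subalgebra of codimension one which is automatically an ideal (every codimension-one subalgebra of a nilpotent Lie algebra is an ideal—this follows because the normalizer of a proper subalgebra strictly contains it in the nilpotent case). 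Pulling back gives the desired ideal through $X$. So the content is really: \emph{every nilpotent Lie algebra of dimension $\ge 2$ has a codimension-one ideal, and moreover one can be chosen through any prescribed element.}

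The key steps, in order, would be: (1) Recall that in a nilpotent Lie algebra $\hh$, every proper subalgebra is strictly contained in its normalizer; consequently every subalgebra of codimension one is an ideal. (2) Recall that the derived algebra $[\gg,\gg]$ is a proper ideal (strictly smaller than $\gg$) when $\gg\ne 0$, again by nilpotency. (3) Given $X\in\gg$: if $X\notin[\gg,\gg]$, then since $[\gg,\gg]$ has codimension $\ge 1$, choose a codimension-one subspace $\aa$ with $[\gg,\gg]\subset\aa$ and $X\in\aa$ (possible because the image of $X$ in the nonzero vector space $\gg/[\gg,\gg]$ can be completed to a basis, and we take $\aa$ to be the preimage of a hyperplane through that image); any subspace containing $[\gg,\gg]$ is an ideal, so $\aa$ works. (4) If $X\in[\gg,\gg]$: pass to $\bar\gg:=\gg/\RR Z$ for a suitable central $Z$; here one must arrange that $X$ has nonzero image, i.e. choose $Z\in\zz$ with $Z\notin\RR X$, which is possible as soon as $\zz\not\subset\RR X$. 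Then by induction on dimension $\bar\gg$ has a codimension-one ideal through the image of $X$; its preimage is a codimension-one ideal of $\gg$ through $X$. The one remaining case is $\zz\subset\RR X$, i.e. $\zz=\RR X$ is one-dimensional and spanned by $X$; but then $X$ is central, and step (3)'s argument applies directly once we note a central element always lies in some codimension-one ideal (take any hyperplane $\aa\supset[\gg,\gg]$ with $X\in\aa$—but if $X\in[\gg,\gg]$ this needs the quotient argument, so instead: $\gg/\RR X$ is nilpotent of dimension $\ge 1$; if that dimension is $\ge 2$ it has a codimension-one ideal whose preimage works; if it equals $1$ then $\dim\gg=2$, $\gg$ is abelian, and any line through $X$ is an ideal).

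The main obstacle I anticipate is the bookkeeping in the induction: choosing the central element $Z$ to quotient by so that the image of $X$ remains suitable (in particular, so that we don't trivialize $X$), and handling the low-dimensional base cases ($\dim\gg=2$, where $\gg$ is necessarily abelian, so every codimension-one subspace is an ideal). None of these steps involves a hard computation—the crux is simply assembling the classical facts (nilpotent $\Rightarrow$ center nontrivial; nilpotent $\Rightarrow$ $[\gg,\gg]\subsetneq\gg$; subalgebras of codimension one in nilpotent algebras are ideals) into a clean induction on $\dim\gg$, with the base case and the central-element case verified directly.
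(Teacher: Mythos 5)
Your plan is correct and rests on the same mechanism as the paper's proof: induction on $\dim\gg$, quotienting by a one-dimensional central ideal (which exists by nilpotency) and pulling back a codimension-one ideal, with the two-dimensional abelian case as the base. The paper's version shows your extra case analysis (whether $X\in[\gg,\gg]$, arranging a nonzero image in the quotient, the case where the center equals $\RR X$) is unnecessary: since the statement is asserted for every element, including one whose image in the quotient is zero, you may quotient by any central line and apply the inductive hypothesis directly to the image of $X$.
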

\begin{proof}  If $\dim (\gg)= 2$ the conclusion is trivial because
  $\gg$ is abelian. 
 Assume inductively:  $\dim (\gg)=d\ge 3$ and
the lemma holds for Lie algebras of lower dimension. 
Let $Y\in \gg$ be arbitrary.  
Fix a $1$-dimensional central ideal $\jj$ and a surjective
Lie algebra homomorphism 
\[\pi\co \gg\to \gg/\jj.
\]  
By the inductive assumption $\pi (Y)$
belongs to a codimension-one ideal $\ff\subset  \gg/\jj$, whence $Y$
belongs the codimension-one ideal $\pi^{-1} (\ff) \subset \gg$.
\end{proof}

The following  simple result is  useful: 
\begin{proposition}             \mylabel{th:abh}
If $p\in M$ and  $\uu, \ww \subset \gg_p$ are linear subspaces such that $\uu
+\ww=\gg$, then $p\in \Fix \gg$.  \qed
\end{proposition}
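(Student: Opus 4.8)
The plan is: since $\gg_p$ is by definition a \emph{subalgebra}, it is in particular a linear subspace of $\gg$, so the hypotheses $\uu,\ww\subset\gg_p$ and $\uu+\ww=\gg$ force $\gg=\gg_p$ at once; the remaining work is to turn the equality $\gg_p=\gg$ into the conclusion $p\in\Fix\gg$, and for that one invokes the remark recorded just above the proposition.

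Carrying this out: from $\uu+\ww=\gg$ and $\uu,\ww\subset\gg_p$ we get $\gg=\uu+\ww\subset\gg_p\subset\gg$, hence $\gg_p=\gg$. By the remark preceding the proposition, a one-parameter subgroup $Y$ lies in $\gg_p$ precisely when $Y(\RR)\subset G_p$; and every element of the isotropy group $G_p$ fixes $p$, since $G_p$ is generated by maps fixing $p$ and a composite or inverse of such maps again fixes $p$ wherever it is defined. Therefore $\gg_p$ coincides with $\{Y\in\gg\co p\in\Fix Y\}$, and $\gg_p=\gg$ says exactly that $p\in\Fix Y$ for every $Y\in\gg$, that is, $p\in\bigcap_{Y\in\gg}\Fix Y=\Fix\gg$ (which equals $\Fix G$ by Proposition \ref{th:fixg}). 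If one prefers to argue directly on the group: by the remark $\exp u\cdot\exp w\in G_p$ for all $u\in\uu$, $w\in\ww$, and the map $(u,w)\mapsto\exp u\cdot\exp w$ has differential $(v,z)\mapsto v+z$ at the origin, which is surjective because $\uu+\ww=\gg$; hence its image contains a neighborhood of $e$, so $G_p=G$ by connectedness of $G$, and every element of $G$ fixes $p$.

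The only point that calls for care --- and it is already the content of the remark cited above, not of the proposition itself --- is the passage from ``$\gg_p$ is \emph{generated} as a Lie algebra by the one-parameter subgroups fixing $p$'' to ``\emph{every} element of $\gg_p$ fixes $p$''. For global actions this is the standard correspondence between connected Lie subgroups and subalgebras together with closure of $G_p$ under composition and inversion; for local actions the same reasoning goes through on a neighborhood of $e$ using the axioms of Section \ref{sec:local}. Granting that, the proposition reduces to the trivial observation that a subalgebra containing $\uu$ and $\ww$ must contain $\uu+\ww=\gg$ --- which is doubtless why it is stated without proof.
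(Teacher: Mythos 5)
Your proof is correct and is essentially the paper's own (omitted) justification: the paper states the proposition with no argument, regarding it as immediate from the fact that $\gg_p$ is a subalgebra --- so $\uu+\ww=\gg$ forces $\gg_p=\gg$ --- combined with the preceding remark that $Y\in\gg_p$ iff $Y(\RR)\subset G_p$, which converts $\gg_p=\gg$ into $p\in\Fix\gg=\Fix G$. The one point you rightly isolate (passing from ``$G_p$ is generated by elements fixing $p$'' to ``every element of $G_p$, hence every one-parameter subgroup, fixes $p$,'' with the attendant domain questions for local actions) is treated at exactly the same level of rigor as the paper's own remark and Propositions \ref{th:fixg} and \ref{th:hg}, so your write-up matches the intended argument.
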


The set $\mcal C (\gg)$  of codimension-one ideals  has a natural
structure as a projective variety in the real 
  projective space $\P{d-1},
   \,d=\dim (\gg)$, and is given the corresponding metrizable topology.

\begin{proposition}[\sc Plante  \cite{Plante86}]  \mylabel{th:V}
Assume $\gg$ is nilpotent. 
\begin{description}

\item[(i)]   Every component of $\mcal C(\gg)$ has positive dimension.

 \item[(ii)] Every codimension-one subalgebra of $\gg$  is an ideal. 

\item[(iii)]  If   $O\subset M$ is  a one-dimensional orbit of the
  local action, then  
\[
  p\in  O\implies  \gg_p=\gg_O \in\mcal C(\gg).  
\]

\end{description}
\end{proposition}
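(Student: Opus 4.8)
The three assertions are really about the algebra $\gg$ together with the geometry of one-dimensional orbits, so I would prove (i) and (ii) purely algebraically and then derive (iii) from (ii) plus the local-action axioms. For (i): fix a component $C$ of $\mcal C(\gg)$ and an ideal $\ff\in C$. A codimension-one ideal $\ff$ is the kernel of a nonzero linear functional $\lambda\colon\gg\to\RR$ that vanishes on $[\gg,\gg]$, since $\gg/\ff$ is a one-dimensional (hence abelian) quotient; conversely every functional vanishing on $[\gg,\gg]$ has a codimension-one ideal as kernel. Thus $\mcal C(\gg)$ is exactly the projectivization of the annihilator of $[\gg,\gg]$ in $\gg^{*}$, which is a linear subspace of dimension $\dim\gg-\dim[\gg,\gg]\ge 2$ because $\gg$ is nilpotent of dimension $d\ge 2$ (if $\dim[\gg,\gg]=d-1$ or $d$ one contradicts nilpotency via the descending central series). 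Hence $\mcal C(\gg)$ is a single projective space $\P{k}$ with $k\ge 1$, which is connected of positive dimension; in particular every component does. Assertion (ii) is the standard fact that in a nilpotent Lie algebra every subalgebra of codimension one is an ideal: a codimension-one subalgebra $\hh$ contains $[\gg,\gg]$ (otherwise $\hh+[\gg,\gg]=\gg$, and then by nilpotence one reaches a contradiction feeding this relation into the lower central series $\gg^{i+1}=[\gg,\gg^{i}]$), and any subalgebra containing $[\gg,\gg]$ is automatically an ideal.

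\textbf{Assertion (iii).} Let $O$ be a one-dimensional orbit and $p\in O$. First I would show $\gg_{p}$ has codimension $\le 1$: the orbit map $G\to O$, $g\mapsto g(p)$ (defined near $e$ via the local action) is a submersion-type map onto the one-dimensional $O$, so its differential at $e$ has rank $1$, and the kernel of that differential is precisely the set of $Y\in\gg$ with $p\in\Fix Y$; this kernel spans a codimension-one subspace, hence $\gg_{p}$—the subalgebra it generates—has codimension $0$ or $1$. Codimension $0$ is impossible: if $\gg_{p}=\gg$ then $p\in\Fix\gg$ by Proposition \ref{th:fixg} (or directly), so the orbit $O=\{p\}$ is zero-dimensional, a contradiction. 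Therefore $\gg_{p}$ is a codimension-one subalgebra, hence by (ii) a codimension-one ideal, i.e. $\gg_{p}\in\mcal C(\gg)$. Finally $\gg_{p}$ does not depend on the choice of $p\in O$: if $q=g(p)$ for $g$ near $e$, then by Proposition \ref{th:hg}(i), $Y\in\gg_{p}$ with $p\in\Fix Y$ gives $q\in\Fix{gYg^{-1}}$, so conjugation by $g$ carries $\{Y: p\in\Fix Y\}$ into $\{Z: q\in\Fix Z\}$ and hence $\operatorname{Ad}(g)\gg_{p}\subset\gg_{q}$; since $\gg_{p}$ is an ideal, $\operatorname{Ad}(g)\gg_{p}=\gg_{p}$, so $\gg_{p}\subset\gg_{q}$, and by symmetry (using $g^{-1}$) equality holds. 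As $O$ is covered by such translates of $p$ (it is a connected homogeneous space under the local action), $\gg_{q}=\gg_{p}$ for all $q\in O$, which is exactly $\gg_{p}=\gg_{O}$.

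\textbf{Main obstacle.} The algebraic facts (i) and (ii) are routine nilpotent-Lie-algebra lemmas, and the conjugation-invariance argument in (iii) is a direct application of Proposition \ref{th:hg}(i). The one step requiring care is the \emph{rank} claim for one-dimensional orbits: I must justify that the infinitesimal stabilizer $\{Y\in\gg: p\in\Fix Y\}$ is genuinely a linear hyperplane in $\gg$ (neither all of $\gg$, nor of codimension $\ge 2$), working only with the \emph{local} action rather than a global one. This means being precise that the evaluation map $\msf{ev}_{\alpha}$ is defined and continuous near $(e,p)$, that $t\mapsto\alpha(X(t))(p)$ is a $C^{1}$ curve through $p$ for each $X\in\gg$ whose velocity is the induced vector field value, and that "$O$ is one-dimensional" forces exactly one independent velocity direction. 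Once that local-to-infinitesimal bookkeeping is in place—using the vector-field description of the local action recalled after \eqref{eq:pullback}—everything else follows from (i), (ii), Proposition \ref{th:hg}, and Proposition \ref{th:abh} (the latter guaranteeing no contradiction when two codimension-one stabilizers are compared).
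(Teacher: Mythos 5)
Your parts (i) and (ii) are correct routine nilpotent-Lie-algebra arguments (every codimension-one ideal contains $[\gg,\gg]$ and conversely, so $\mcal C(\gg)$ is the projective space of hyperplanes containing $[\gg,\gg]$, of dimension $\ge 1$ since $\dim(\gg/[\gg,\gg])\ge 2$; and a codimension-one subalgebra must contain $[\gg,\gg]$, hence is an ideal). Note, for calibration, that the paper does not prove this proposition at all: it is quoted from Plante \cite{Plante86}, so there is no internal argument to compare with; what matters is whether your proof works in the paper's setting.

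There is a genuine gap in your part (iii), and it is exactly at the step you flag. The local action of Definition \ref{th:defloc} is only \emph{continuous}: the evaluation map is continuous, nothing more. So there is no differential of the orbit map at $e$, no guarantee that $t\mapsto\alpha(X(t))(p)$ is a $C^1$ curve, and no induced vector field $\hat X$ whose value at $p$ you can take as a ``velocity''. The passage after \eqref{eq:pullback} that you invoke goes in the opposite direction (a homomorphism into smooth vector fields \emph{produces} a local action); it does not say that an arbitrary continuous local action admits such a description, and in general it does not. Consequently your identification of $\{Y\in\gg\co p\in\Fix Y\}$ with the kernel of a rank-one linear map is unjustified; it is not even clear from your argument that this set is a linear subspace, let alone a hyperplane. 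The way this is actually established (and the reason the statement is Plante's) is topological rather than infinitesimal: one shows that the isotropy group $G_p$ is a closed subgroup and that a one-dimensional orbit is, near $p$, the continuous one-to-one image of $G/G_p$, which by a Baire-category/homogeneity argument forces $\dim G/G_p=1$; then $\gg_p$, the Lie algebra of $G_p$ (Cartan's closed subgroup theorem makes it a genuine subalgebra of codimension one), is an ideal by your (ii), and Ad-invariance of ideals plus Proposition \ref{th:hg}(i) gives independence of $p\in O$, as in your final paragraph. So the conjugation argument and the use of (ii) are fine, but the codimension-one claim needs this topological input (or an added smoothness hypothesis), not the ``local-to-infinitesimal bookkeeping'' you propose, which cannot be carried out for a merely continuous local action.
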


 \subsection*{Minimal sets} \mylabel{sec:minimal}

 A {\em minimal set} (for the local action of $G$ on $M$) is a nonempty compact
invariant set containing no 
smaller such set.  Compact orbits are minimal sets; all other minimal
sets are {\em exceptional}. 

An orbit homeomorpic to the unit circle $\S 1$ is a {\em circle orbit}.  
A circle orbit is  {\em isolated} if it has a neighborhood
containing no other circle orbit; otherwise it is  {\em nonisolated}.

The following proposition is adapted from Plante
\cite{Plante86}:

\begin{proposition}             \mylabel{th:plante2}
Let $M_1\subset M$ be a compact surface.
\begin{description}

\item[(i)]   The number of exceptional minimal sets in $M_1$
is at most half the
genus of $M_1$.

\item[(ii)] The union of the minimal sets in $M_1$ is compact.

\item[(iii)] The union of  the nonisolated circle orbits in $M_1$ is compact.

\item[(iv)] 
If $C\subset M$  is a nonisolated circle orbit, every neighborhood of
$C$  contains a compact invariant
surface $Q$ such that: 
\begin{itemize}

\item each component $P$ of $Q$ is either an annulus or a M\"obius
  band,

\item $\Lam\,\verb=\=\,P$ contains at most finitely many minimal
  sets. 
\end{itemize}
\end{description}
\end{proposition}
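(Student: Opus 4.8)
\textbf{Proof proposal for Proposition \ref{th:plante2}.}

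The plan is to treat the four assertions in order, reducing each to a standard structural fact about codimension-one foliations and flows on surfaces, following Plante's original arguments but adapted to the local-action setting. For \textbf{(i)}, I would first reduce to the case of a single one-parameter subgroup: by Proposition \ref{th:V}(iii) a one-dimensional orbit $O$ in an exceptional minimal set $\Lambda$ has stabilizer $\gg_O\in\mcal C(\gg)$ of codimension one, and since $\mcal C(\gg)$ has components of positive dimension (Proposition \ref{th:V}(i)), one can choose $Y\in\gg$ not lying in $\gg_O$, so that $\varphi^Y$ is transverse to the orbit foliation near $\Lambda$. The exceptional minimal set $\Lambda$ then carries a transversely orientable (after passing to a double cover if necessary) one-dimensional foliation by the $G$-orbits; the Poincar\'e--Bendixson-type analysis of exceptional minimal sets of flows on surfaces, together with the Euler-characteristic bookkeeping of Euler/Milnor for foliations with singularities, shows that each exceptional minimal set ``consumes'' at least two units of genus. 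Summing over the finitely many exceptional minimal sets (finiteness itself following from a compactness argument, cf.\ part (ii)) in the compact surface $M_1$ gives the bound.

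For \textbf{(ii)} and \textbf{(iii)}, the key is a limit argument. Suppose $\{S_k\}$ is a sequence of minimal sets with points $p_k\in S_k$, $p_k\to p\in M_1$. Because each $S_k$ is $G$-invariant and $G$ is connected, the frontier behavior is controlled by joint continuity of the evaluation map $\msf{ev}_\alpha$ (Definition \ref{th:defloc}); the Hausdorff limit $S$ of a subsequence of the $S_k$ is compact and invariant, hence contains a minimal set. The content is to show $p$ itself lies in a minimal set, which follows because a limit of orbits whose closures are minimal is again contained in such a closure — here one uses that distinct minimal sets are disjoint and that the $S_k$ cannot accumulate on a non-invariant set. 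For (iii) the same argument applies, using in addition that the limit of circle orbits is either a circle orbit or a fixed point or a one-dimensional orbit whose closure contains a circle orbit or an exceptional minimal set; the ``nonisolated'' hypothesis is exactly what forbids the limit from being an isolated circle orbit, and one checks the limit is again a (nonisolated) circle orbit by a local normal-form analysis near a one-dimensional orbit (Proposition \ref{th:V}(iii) again).

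For \textbf{(iv)}, given a nonisolated circle orbit $C$, I would work in a tubular neighborhood of $C$, which is an annulus or M\"obius band $N$; since $C$ is a one-dimensional orbit, by Proposition \ref{th:V}(iii) the stabilizer $\gg_C\in\mcal C(\gg)$, and the $G$-action near $C$ is essentially an action by a flow transverse to $C$ plus the circle action along $C$. The nonisolated hypothesis provides nearby circle orbits $C'$ arbitrarily close to $C$; using two such on either side of $C$ (in the M\"obius case, one circle and its structure) as boundary, I would cut out a compact invariant subsurface $Q\subset N$ bounded by circle orbits, each component of which is then an annulus or M\"obius band. That $\Lambda\setminus P$ (where $\Lambda$ is the union of minimal sets, as in (ii)) contains only finitely many minimal sets follows from (i): the complement has bounded genus, hence bounded number of exceptional minimal sets, and the circle orbits in it, being a compact set by (iii) minus an open piece, reduce to finitely many by isolating each.

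The main obstacle I anticipate is part \textbf{(iv)}, specifically producing the invariant surface $Q$ with the stated properties: one must ensure the cutting curves can be taken to be circle orbits bounding on \emph{both} sides (which requires a careful local picture of how nearby circle orbits sit inside the annular neighborhood, and a transversality/holonomy argument to rule out pathological accumulation), and one must control the finiteness of minimal sets in $\Lambda\setminus P$ uniformly. The genus bound of (i) is the crucial input that makes this finiteness work, so the logical dependence runs (i) $\Rightarrow$ (ii),(iii) $\Rightarrow$ (iv), and I would present it in that order.
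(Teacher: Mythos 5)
The paper's own ``proof'' of this proposition is nothing more than a citation --- (i) is said to follow by the proof of Plante's Lemma~2.3, and (ii)--(iv) by ``slight revisions'' of the arguments on page~155 of Plante's paper --- so your sketch has to stand on its own as a reconstruction of those arguments, and it does not: the engine of Plante's analysis, namely holonomy-invariant transverse measures obtained from the polynomial growth of nilpotent groups and the resulting control on how circle orbits and exceptional minimal sets can accumulate, appears nowhere in your proposal, and each part has a hole exactly where that input is needed. The clearest gap is in (ii)--(iii): your limit argument begs the question. The claim that ``a limit of orbits whose closures are minimal is again contained in such a closure'' is precisely the assertion to be proved, and for a general flow (which is itself a nilpotent action) it is false --- periodic orbits can accumulate on a homoclinic saddle loop, whose nonstationary points lie in no minimal set --- so compactness of the union of minimal sets cannot be extracted from joint continuity of the evaluation map plus disjointness of minimal sets; some structure special to the situation must be invoked, and you never supply it.

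The other two parts have concrete problems as well. In (i), the assertion that each exceptional minimal set ``consumes at least two units of genus'' via ``Euler-characteristic bookkeeping'' is unsupported and cannot be the right mechanism: $\chi$ does not detect exceptional minimal sets at all (Denjoy-type minimal sets occur on the torus, where $\chi=0$), and the genus bound in Plante's Lemma~2.3 is obtained homologically, from closed transversals through the exceptional minimal sets and an independence/intersection count, together with the invariant-measure structure --- none of which your sketch constructs. In (iv), the concluding finiteness does not ``follow from (i)'': (i) bounds only exceptional minimal sets, while the danger in $\Lam\setminus P$ is infinitely many circle orbits, and ``being a compact set by (iii) minus an open piece, reduce to finitely many by isolating each'' is not an argument --- a compact union of circle orbits can perfectly well contain infinitely many of them. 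Likewise, cutting out an invariant annulus or M\"obius band bounded by circle orbits on both sides requires knowing that orbits near $C$ close up rather than spiral, which is again the holonomy/invariant-measure point. You correctly identify (iv) as the hard step, but the proposal does not close it, and the missing nilpotency-specific input is needed already in (i)--(iii).
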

\begin{proof}  (i) is a generalization of 
  \cite[Lemma 2.3]{Plante86}, with the same proof.  Slight revisions
  of arguments on \cite[page 155]{Plante86} prove the other assertions.
\end{proof}

\section{Proof of Theorem \ref{th:MAIN}}
  \mylabel{sec:proofs}  
Recall that $G$ is a connected nilpotent Lie group with a local action
on a surface $M$, the Lie algebra of $G$ is $\gg$, and $K\subset M$ is
an essential block of fixed points for the induced local flow of a
one-parameter subgroup $X\co \RR \to G$.  

The theorem states that $\Fix G\cap K\ne\varnothing$, which is
trivial if $\dim G \le 1$.  Assume inductively: $\dim G >1$ and the
conclusion holds for groups of lower dimension.

Every neighborhood of $K$ in $M$ contains an isolating neighborhood $U$ for
$(X, K)$ such  that $\ov
U$ is a compact surface. 
It suffices to prove for all such $U$ that
\begin{equation}                \label{eq:MAIN*}
 \Fix G \cap U\ne\varnothing.
\end{equation}

\begin{lemma}           \mylabel{th:wm}
If $U$ contains only 
finitely many minimal sets,  Equation (\ref{eq:MAIN*}) holds.
\end{lemma}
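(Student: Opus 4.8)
The strategy is to exploit the nilpotency of $\gg$ together with the codimension-one ideal structure from Lemma \ref{th:covering} and Proposition \ref{th:V}, running the overall induction on $\dim G$. Since $\dim G > 1$, pick a codimension-one ideal $\ff \subset \gg$ containing $X$ (Lemma \ref{th:covering}); let $H \subset G$ be the corresponding connected normal subgroup, which is nilpotent of dimension $\dim G - 1$. The set $K$ is an $X$-block for the local flow of $X$ viewed inside the local $H$-action, and it is still essential since the index $\msf i_K(X)$ depends only on $X$ and $K$, not on the ambient group. By the inductive hypothesis applied to $H$, we get $\Fix H \cap U \ne \varnothing$; in fact we should work with $\Fix H \cap \ov U$, which is a nonempty compact set, and observe that since $H$ is normal in $G$, Proposition \ref{th:hg}(ii) tells us $\Fix H$ is invariant under the local $G$-action. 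So $F := \Fix H \cap \ov U$ is a nonempty compact set that is "almost" $G$-invariant near $K$.

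Next I would analyze the local $G$-action restricted to (a neighborhood of) $F$. Pick any $Y \in \gg$; then $Y$ generates a local flow on $M$ that preserves $\Fix H$. The quotient $\gg / \ff$ is one-dimensional, so $\gg = \ff + \RR Y$ for any $Y \notin \ff$. If some point $p \in F$ were fixed by $Y$ as well, then by Proposition \ref{th:abh} (with $\uu = \ff$, $\ww = \RR Y$, both inside $\gg_p$) we would get $p \in \Fix \gg = \Fix G$, and since $p \in \ov U$ with $U$ isolating we could push $p$ into $U$ itself — this needs the usual small argument that $\Fix G \cap \fr U = \varnothing$ because $\fr U \cap \Fix \varphi^X = \varnothing$ and $\Fix G \subset \Fix X$. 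So the remaining case is that the local flow $\varphi^Y$ acts on $F$ with no fixed points, i.e.\ $F$ is a nonempty compact invariant set for a local flow (or more precisely for the restricted $\RR$-action of $Y$) on the surface $\ov U$, with $\Fix{\varphi^Y} \cap F = \varnothing$.

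Now the hypothesis that $U$ contains only finitely many minimal sets enters. The compact invariant set $F$ contains a minimal set $N$ for the $G$-action (or at least for the relevant subgroup action); since $\Fix H \supset N$ and $N$ carries a nontrivial $\gg/\ff \cong \RR$ flow with no fixed point, $N$ must be a circle orbit or an exceptional minimal set of the induced one-dimensional dynamics. By Proposition \ref{th:V}(iii), for any one-dimensional orbit $O$ in such an $N$ we have $\gg_p = \gg_O \in \mcal C(\gg)$ for $p \in O$, a codimension-one ideal. Combining this codimension-one stabilizer with $\ff$: if $\gg_O \ne \ff$ then $\gg_O + \ff = \gg$ and Proposition \ref{th:abh} again forces $O \subset \Fix G$, a contradiction with $N$ being a genuine one-dimensional orbit; hence $\gg_O = \ff$ for every such orbit, which pins down the geometry severely. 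The finiteness of minimal sets then should allow me to produce a specific minimal set — a circle orbit $C$ whose stabilizer is exactly $\ff$ — and to study the $G/\!\!\sim$ action transverse to $C$.

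The main obstacle is the last step: ruling out a fixed-point-free local $G$-action in the presence of a circle orbit $C$ with $\gg_C = \ff$ a codimension-one ideal, using only finiteness of minimal sets. I expect this to require a holonomy/return-map argument along $C$: the one-parameter subgroups complementary to $\ff$ act near $C$ with a well-defined Poincaré return map on a transversal, and invariance of $\Fix H$ together with finiteness of nearby minimal sets should force this return map to have a fixed point (via Brouwer on an interval, or via an index/Euler-characteristic computation on an invariant annulus or Möbius band furnished by Proposition \ref{th:plante2}(iv), where $\chi$ of an annulus or Möbius band is $0$ but the relevant sub-surface bounded inside $U$ can be arranged to carry the nonzero index $\msf i_K(X)$). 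Reconciling the index bookkeeping — transferring the nonvanishing of $\msf i_K(X)$ through the reduction to $H$ and down to the transverse structure at $C$ — is the delicate accounting I would expect to occupy most of the real work.
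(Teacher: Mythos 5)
There is a genuine gap, and you flag it yourself: the entire argument hinges on the last step, ``ruling out a fixed-point-free local $G$-action in the presence of a circle orbit $C$ with $\gg_C=\ff$,'' and you only conjecture that a holonomy/return-map or index-transfer argument will close it. Nothing in the tools you invoke forces the return map on a transversal to have a fixed point, and the index bookkeeping you hope for is not supplied; moreover your use of the finiteness hypothesis conflates minimal sets of the $G$-action in $U$ with minimal sets of the single flow $\varphi^Y$ restricted to $F=\Fix H\cap\ov U$, which are not the same objects. So as written the proof does not go through: after the (correct) reduction via one codimension-one ideal $\ff\ni X$ and the induction hypothesis, the hard case is left open.

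The paper's proof avoids this dead end by using not one ideal but a positive-dimensional family of them. By Lemma \ref{th:covering} together with Proposition \ref{th:V}(i), one can choose elements $Y_k\to X$ lying in pairwise distinct codimension-one ideals $\hh_k$, and by Proposition \ref{th:stable}(a) each $Y_k$ close enough to $X$ still has $U$ isolating with $\msf i(Y_k,U)\ne 0$. The induction hypothesis applied to each $\hh_k$ gives a nonempty compact set $\Fix{\hh_k}\cap U$, invariant by Proposition \ref{th:hg}(ii), hence containing a minimal set $L_k\subset U$. Here is where the finiteness of minimal sets in $U$ is used, purely combinatorially: by pigeonhole there are $i\ne j$ with $\hh_i\ne\hh_j$ but $L_i=L_j$, and then at any $p\in L_i$ the two distinct codimension-one subalgebras $\hh_i,\hh_j\subset\gg_p$ satisfy $\hh_i+\hh_j=\gg$, so Proposition \ref{th:abh} yields $p\in\Fix G\cap U$. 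If you want to salvage your single-ideal approach you would have to actually prove the transverse fixed-point statement near $C$, which is essentially as hard as the theorem itself; the multi-ideal pigeonhole is the missing idea.
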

\begin{proof}  
Since $\dim (\gg)>1$ and $\gg$ is covered by codimension-one ideals
(Lemma \ref{th:covering}), $\gg$ contains a set
$\{Y_k\}_{K\in \NN}$ converging to $X$, and a sequence $\{\hh_k\}$ of pairwise 
distinct, codimension-one ideals, such that: $Y_k\in\hh_k$, $U$ is
isolating for $Y_k$, and $\msf i (Y, U)\ne 0$ (Proposition
\ref{th:stable}).
As the set $K_k := \Fix {\hh_k}\cap U $ is compact, nonempty by the
induction hypothesis, and invariant by Proposition \ref{th:hg}(ii),
there is a minimal set $L_k\subset K_k\subset U$.  The
hypothesis of the Lemma implies there exist indices $i, j$ such that
$\hh_i\ne \hh_j$ and $L_i= L_j$ .  Equation (\ref{eq:MAIN*}) now
follows from Proposition \ref{th:abh}.
\end{proof}

In verifying Equation (\ref{eq:MAIN*}) we can assume $ U$ contains
infinitely many minimal sets, thanks to Lemma \ref{th:wm}.  Setting
$M_1:=\ov U$ in Proposition \ref{th:plante2}, we see that all but
finitely many of these are circle orbits, and there is a nonempty,
compact, invariant surface $P\subset \ov U$ such that:
\begin{equation}                \label{eq:pm1a}
\chi (P)=0 \ \mbox{and $\ov U \setminus P$ contains only finitely
  many minimal sets.}  
\end{equation}
If $\Fix G\cap P\ne\varnothing$,   Equation
(\ref{eq:MAIN*}) holds and the proof is complete.
Henceforth  assume:
\begin{equation}                \label{eq:FGP}
\Fix G\cap P=\emp
\end{equation}

 \begin{lemma}           \mylabel{th:z}
There exists  $Z\in \gg$ such that:

\begin{description}
\item[(a)]  $U$ is isolating for $Z$, 

\item[(b)] $\msf i (Z, U)=\msf i (X, U)\ne 0$,

\item[(c)] $\Fix Z \cap \p P=\emp$. 

\end{description}
\end{lemma}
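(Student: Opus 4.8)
The plan is to produce $Z$ by perturbing $X$ within $\gg$, using two facts: that essentiality of the index is an open condition (Proposition \ref{th:stable}(a)), and that the "bad" set — those $Y\in\gg$ whose fixed-point set meets the one-dimensional boundary orbits making up $\partial P$ — is small. First I would examine $\partial P$. Since $P$ is a compact invariant surface each of whose components is an annulus or a M\"obius band (Proposition \ref{th:plante2}(iv)), its boundary $\partial P$ is a finite disjoint union of circles, each of which is an invariant compact $1$-manifold, hence either a circle orbit or a union of fixed points and orbit arcs. By hypothesis (\ref{eq:FGP}), $\Fix G\cap P=\emp$, so $\partial P$ contains no point of $\Fix G=\Fix\gg$; thus every boundary circle $C$ that is an orbit is a genuine $1$-dimensional orbit, and by Proposition \ref{th:V}(iii) its stabilizer $\gg_C=\gg_p$ (any $p\in C$) is a \emph{codimension-one} ideal $\hh_C\in\mcal C(\gg)$. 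A point $p\in\partial P$ lies in $\Fix Z$ exactly when $Z\in\gg_p$, and for $p$ on such a circle orbit $\gg_p=\hh_C$; for $p$ on a boundary component that is not a single orbit, if $p$ is not fixed then $\gg_p$ is again a proper (indeed codimension-$\ge 1$) subspace. So the set of $Z\in\gg$ with $\Fix Z\cap\partial P\neq\emp$ is contained in a \emph{finite} union of proper linear subspaces of $\gg$: one codimension-one ideal $\hh_C$ for each of the finitely many boundary circle orbits $C$, together with the stabilizers of the finitely many stationary-looking arcs.

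Next I would combine this with Proposition \ref{th:stable}(a): the set $\mfrak N(X,U,\gg)$ of $Y\in\gg$ for which $U$ is isolating and $\msf i(Y,U)=\msf i(X,U)\ne 0$ is an \emph{open} neighborhood of $X$. An open subset of the vector space $\gg$ cannot be covered by finitely many proper linear subspaces (here I use $\dim\gg>1$; even $\dim\gg\ge 1$ with a nonempty open set suffices since a finite union of proper subspaces has empty interior). Hence there exists $Z\in\mfrak N(X,U,\gg)$ avoiding all of the finitely many proper subspaces identified above, i.e. with $\Fix Z\cap\partial P=\emp$. For such $Z$: (a) $U$ is isolating for $Z$ because $Z\in\mfrak N(X,U,\gg)$; (b) $\msf i(Z,U)=\msf i(X,U)\ne 0$, again because $Z\in\mfrak N(X,U,\gg)$; (c) $\Fix Z\cap\partial P=\emp$ by construction. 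This is exactly the assertion of the lemma.

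The main obstacle I anticipate is the structural analysis of $\partial P$ — specifically, verifying that the set of $Z$ with $\Fix Z$ meeting $\partial P$ really is contained in finitely many proper linear subspaces, rather than something larger. The subtle point is the boundary components that are \emph{not} orbits: such a circle is a union of finitely many orbit-arcs and finitely many fixed points of the flow $\varphi^X$, but a priori a different $Z\in\gg$ could have a \emph{whole arc} of fixed points there. The resolution is that a point $p$ on a boundary component lies in $\Fix Z$ iff $Z\in\gg_p$, and since $p\notin\Fix G=\Fix\gg$ we have $\gg_p\subsetneq\gg$ proper; moreover along each boundary circle the function $p\mapsto\gg_p$ takes only finitely many values — on each orbit-arc $\gg_p$ is constant (it is the stabilizer of that orbit, a fixed proper subspace by Proposition \ref{th:V} applied to the orbit's closure or directly, since distinct points of one orbit are carried to one another by $G$, forcing equal stabilizers via Proposition \ref{th:hg}(i)), and there are finitely many such arcs and endpoints. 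Thus $\{Z:\Fix Z\cap\partial P\ne\emp\}=\bigcup_{\text{finite}}\gg_{p}$ over a finite set of representative points $p\in\partial P$, a finite union of proper subspaces, which has empty interior in $\gg$ — and intersecting the open set $\mfrak N(X,U,\gg)$ with its complement gives the desired $Z$.
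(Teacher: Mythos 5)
Your argument is correct and is essentially the paper's proof: the paper likewise observes that each of the finitely many boundary components of $P$ contains no fixed point by (\ref{eq:FGP}) and hence is a circle orbit whose stabilizer is a codimension-one ideal (Proposition \ref{th:V}(iii)), and then chooses $Z$ in the nonempty set $\mfrak N(X,U,\gg)\setminus\bigcup_i\gg_{C_i}$, exactly your ``open set minus finitely many proper subspaces'' selection. Your extra case of boundary circles that are not single orbits is vacuous---since (\ref{eq:FGP}) rules out fixed points on $\p P$, every orbit in such a circle is open and connectedness makes the circle a single orbit---so that portion of your discussion (whose finiteness claim was the shakiest step) can simply be omitted.
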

\begin{proof}
 Each of the finitely many components $C_i\subset \p P$ contains no
 fixed point by (\ref{eq:FGP}), hence it is a circle orbit.
Proposition \ref{th:V}(iii)   
  shows that property (c) holds for all $Z$ in
 the dense open set $\gg\,\verb=\=\, \bigcup_i\gg_{C_i}$, while (a)
 and (b) hold if  $Z$  is in the nonempty open set $\mfrak N (X,
 U,\gg)$ (see Proposition \ref{th:stable}).  Thus the Lemma is satisfied by
 all  $Z$ in the nonempty set $\mfrak N (X, U,\gg)
 \,\verb=\=\,\bigcup\gg_{C_i}$.
\end{proof}

Fix  $Z$  as in Lemma  \ref{th:z}.  Then
\begin{equation}                \label{eq:zun0}
\msf i (Z, U)\ne 0.
\end{equation}
Since  $\Fix Z \cap\p P=\emp$ by Lemma \ref{th:z}(c), the sets
$U\,\verb=\=\,P$ and $P\,\verb=\=\,\p P$ are isolating for
$Z$.  Therefore 
\begin{equation}                \label{eq:Z1A}
 \msf i (Z, U) =\msf i (Z, U\verb=\= P) +  \msf i (Z, P\,\verb=\=\, \p P). 
\end{equation}
by  (D3) in Section \ref{sec:index}.
Now $\msf i (Z, P\,\verb=\=\,\p P)=0$, by 
Theorem \ref{th:stable}(b) with $V:=P\,\verb=\=\,\p P$.  Consequently 
\begin{equation}                \label{eq:Z2}
\msf i (Z, U\,\verb=\=\,    \p P) \ne 0
\end{equation}
by (\ref{eq:zun0}) and (\ref{eq:Z1A}).  Equations (\ref{eq:pm1a}) and
(\ref{eq:Z2}) show that $U\,\verb=\=\, P$ contains only finitely many
minimal sets.  Therefore Lemma \ref{th:wm} yields
 \[
 \Fix G \cap  (U\,\verb=\=\, P) \ne\varnothing,
\]
implying  (\ref{eq:MAIN*}).  This completes the proof of Theorem \ref{th:MAIN}.
\qed


\end{document}